\documentclass[11pt]{article}

\usepackage[paperwidth=8.5in,paperheight=11in,portrait,top=1in,bottom=1.in,left=1.15in,right=1.15in]{geometry}
\usepackage[utf8]{inputenc}
\usepackage{authblk}
\usepackage{blkarray}

\usepackage{amsfonts,amsmath,amssymb,amsthm}
\usepackage{latexsym,mathrsfs,mathtools,bm}
\usepackage{graphicx,subcaption,epsfig,caption,float,xcolor}
\usepackage{enumitem}
\usepackage{chngcntr}

\usepackage{tikz}
\usetikzlibrary{calc}

\usepackage[hidelinks]{hyperref}
\usepackage{bookmark}

\theoremstyle{plain}
\newtheorem{thm}{Theorem}[section]

\newtheorem{lem}[thm]{Lemma}
\newtheorem{prop}[thm]{Proposition}

\newtheorem{defi}[thm]{Definition}
\newtheorem{rem}[thm]{Remark}
\newtheorem{exam}[thm]{Example}


\numberwithin{equation}{section}

\newcommand{\cD}{\mathcal{D}}
\newcommand{\cI}{\mathcal{I}}

\newcommand{\cZ}{\mathcal{Z}}

\newcommand{\CC}{\mathbb{C}}
\newcommand{\II}{\mathbb{I}}
\newcommand{\JJ}{\mathbb{J}}
\newcommand{\NN}{\mathbb{N}}

\newcommand{\bv}{\mathrm{v}}
\newcommand{\bx}{\mathbf{x}}
\newcommand{\by}{\mathbf{y}}
\newcommand{\bz}{\mathbf{z}}



\numberwithin{equation}{section}


\marginparwidth 0pt
\oddsidemargin  -10pt
\evensidemargin  -10pt
\marginparsep 0pt
\topmargin   -40pt
\textwidth   7in
\textheight  8.7 in


\title{\bf Bivariate $P$-polynomial association schemes }

\renewcommand*{\Affilfont}{\normalsize\small}
\author[1]{Pierre-Antoine Bernard}
\author[2]{Nicolas Cramp\'e}
\author[3]{Lo\"ic Poulain d'Andecy}
\author[4]{Luc Vinet}
\author[5]{Meri Zaimi\vspace{.5em}}

\affil[1,4,5]{Centre de Recherches Math\'ematiques, Universit\'e de Montr\'eal, \newline\vspace{.9em}
P.O. Box 6128, Centre-ville Station, Montr\'eal (Qu\'ebec), H3C 3J7, Canada.}

\affil[2]{Institut Denis-Poisson CNRS/UMR 7013 - Universit\'e de Tours - Universit\'e d'Orl\'eans, \newline\vspace{.9em}
Parc de Grandmont, 37200 Tours, France.}

\affil[3]{Laboratoire de math\'ematiques de Reims UMR 9008, Universit\'e de Reims Champagne-Ardenne,  \newline\vspace{.9em} 
Moulin de la Housse BP 1039, 51100 Reims, France.}

\affil[3]{IRL-CRM, CNRS UMI 3457, Universit\'e de Montr\'eal. \vspace{.9em} }
\affil[4]{IVADO, Montr\'eal (Qu\'ebec), H2S 3H1, Canada. \vspace{.9em}}

{
	\makeatletter
	\renewcommand\AB@affilsepx{: \protect\Affilfont}
	\makeatother
	\affil[ ]{E-mail addresses}
	\makeatletter
	\renewcommand\AB@affilsepx{, \protect\Affilfont}
	\makeatother
	\affil[1]{pierre-antoine.bernard@umontreal.ca}
	\affil[2]{crampe1977@gmail.com}
	\affil[3]{loic.poulain-dandecy@univ-reims.fr}
	\affil[4]{vinet@crm.umontreal.ca}\affil[5]{meri.zaimi@umontreal.ca}
}

\begin{document}

\date{\today} 
\maketitle
\vspace{15mm}
\begin{center}
\textbf{Abstract}\vspace{5mm}\\
\begin{minipage}{12cm}
Bivariate $P$-polynomial association scheme of type $(\alpha,\beta)$ are defined as a generalization of the $P$-polynomial association schemes. 
This generalization is shown to be equivalent to a set of conditions on the intersection parameters. 
A number of known higher rank association schemes are seen to belong to this broad class. Bivariate $Q$-polynomial association schemes are similarly defined.
\end{minipage}
\end{center}

\vspace{15mm}

\section{Introduction}

This paper is devoted to the generalization of the notion of $P$-polynomial association scheme to the case where the monovariate polynomials appearing in the definition of the latter
are replaced by bivariate polynomials. Numerous examples of bivariate $P$-polynomial association schemes are provided.

Let us recall the usual definitions. The set $\cZ=\{A_0,\dots,A_N\}$ is a symmetric association scheme with $N$ classes if 
the matrices $A_i$, called adjacency matrices, are non-zero $\bv\times \bv$ matrices with $0$ and $1$ entries
satisfying:
\begin{itemize}
 \item[(i)] $A_0=\II$ where $\II$ is the $\bv\times \bv$ identity matrix;
 \item[(ii)] $\displaystyle \sum_{i=0}^N A_i=\JJ$  where $\JJ$ is the $\bv\times \bv$ matrix filled with $1$;
 \item[(iii)] $A_i^t=A_i$ for $i=0,1,\dots N$ and $.^t$ stands for the transpose;
 \item[(iv)] The following relations hold
 \begin{equation}
  A_iA_j=A_jA_i=\sum_{k=0}^N p_{ij}^k A_k,
 \end{equation}
 where $p_{ij}^k$ are constants called intersection numbers.
\end{itemize}
This notion plays a role in various contexts. 
It appears in the theory of experimental design for the analysis of variance \cite{BM,Bai} and arises in the context of algebraic combinatorics and, in particular, in combinatorial designs and coding theory \cite{BI,God2}.
It also generalizes the character theory of representations of groups \cite{Bai,Zie2}. Indeed the matrices $A_i$ 
of an association scheme generate a commutative algebra, called Bose--Mesner algebra, which is related to the notion of character algebra.

Association schemes are very general structures far from being completely understood and classified. 
However, for a subclass of association schemes called $P$-polynomial, many connections with other topics allow a deeper understanding. 
For instance, matrices of a $P$-polynomial association scheme are in correspondence with distance matrices of distance-regular graphs.
They also satisfy by definition,
\begin{equation}
 A_i=v_i(A_1)\quad \text{for } i=0,1,\dots, N,
\end{equation}
where $v_i$ are polynomials of degree $i$ known to verify a three-term recurrence relation. As such, they give by Favard's theorem a set of orthogonal polynomials.
Imposing further that the association scheme is $Q$-polynomial, these polynomials $v_i$ must belong to the Askey scheme \cite{Leo,BI}. 

There exist many multivariate generalizations of the polynomials of the Askey scheme (see \cite{Gri,Tra,GI,GI2,HR,CFR,GW}). 
Some of these polynomials appear already in the context of association schemes, 
in the expression of the eigenvalues of the adjacency matrices \cite{Del,MS,MT,Bie,TAG,Kur}.
The goal of this paper consists in generalizing the notion of $P$-polynomial association schemes to a larger subclass of association schemes 
such that these multivariate polynomials appear naturally. We focus in this paper on the case of bivariate polynomials even if we believe that the case of multivariate 
polynomials can be treated similarly. In Section \ref{sec:def}, the bivariate $P$-polynomial association scheme is defined. 
More precisely, we give the definition of a bivariate $P$-polynomial association scheme of type $(\alpha,\beta)$.
The notion of type $(\alpha,\beta)$ corresponds to a feature of the bivariate polynomials which is also defined in Section \ref{sec:def}.
Then, in Section \ref{sec:met}, we define the notion of $(\alpha,\beta)$-metric association scheme imposing constraints on the intersection numbers and show 
that this notion is equivalent to be a bivariate $P$-polynomial association scheme of type $(\alpha,\beta)$.
This implies that the bivariate polynomials satisfy certain recurrence relations.
Section \ref{sec:idem} recalls the construction of the idempotents.
In Section \ref{sec:ex}, different examples are treated in detail. We show that the direct product of association schemes, 
the symmetrization of association schemes, the $24$-cell, the non-binary Johnson scheme and association schemes based on isotropic or attenuated spaces 
are bivariate $P$-polynomial association schemes. In Section \ref{sec:Qpol}, the definition of bivariate $Q$-polynomial association scheme is provided.
The symmetrization of association schemes is shown to also provide bivariate $Q$-polynomial association schemes as well. Section \ref{sec:outlooks} concludes this paper with some perspectives.

\section{Bivariate $P$-polynomial association schemes}

\subsection{Definition of bivariate $P$-polynomial association schemes \label{sec:def}}

This section provides the main definitions and, in particular, the definition of a bivariate $P$-polynomial association scheme of type $(\alpha,\beta)$.
Firstly, the notion of degree for a bivariate polynomial is introduced. 
In the following, we use the total order \textit{deg-lex} on the monomials denoted $\leq$ and defined by 
\begin{equation}
 x^my^n \leq x^iy^j \Leftrightarrow \begin{cases}
                                         m+n<i+j \\
                                         \text{or}\\
                                        m+n=i+j \quad\text{and}\quad n \leq j\,.
                                        \end{cases} \label{eq:ord}
\end{equation}
The degree, associated to the total order \textit{deg-lex}, of a polynomial $v(x,y)$ in two variables $x$ and $y$ is the couple $(i,j)$ such that $x^iy^j$ is the greatest monomial in $v(x,y)$.

Secondly, the polynomials playing an important role in the following have more structure. 
Let us introduce the partial order on monomials
\begin{equation}
 x^my^n \preceq_{(\alpha,\beta)}  x^i y^j \Leftrightarrow \begin{cases}
                                        m+\alpha n\leq i+\alpha j \\
                                         \text{and}\\
                                     \beta m+ n\leq \beta i+ j\,,
                                        \end{cases} \label{eq:partord}
\end{equation}
where  $0 \leq \alpha\leq 1$ and $0 \leq \beta < 1$. 
The previous constraints on the parameters $\alpha$ and $\beta$ have been chosen such that if $x^my^n \preceq_{(\alpha,\beta)}  x^i y^j$ then  $x^my^n \leq x^iy^j$. 
Note that we have obviously
\begin{equation}\label{monomial-order}
 x^my^n \preceq_{(\alpha,\beta)}  x^i y^j\ \ \ \Leftrightarrow\ \ \ x^{m+1}y^n \preceq_{(\alpha,\beta)}  x^{i+1} y^j\ \ \Leftrightarrow\ \ x^my^{n+1} \preceq_{(\alpha,\beta)}  x^i y^{j+1}\ .
 \end{equation}
We shall use the same symbol $\preceq_{(\alpha,\beta)}$ to also order pairs in $\mathbb{N}^2$. Examples of subsets of points $(m,n)$ smaller than $(i,j)$ are displayed in Figure \ref{fig:dom} for different values of the parameters $\alpha$ and $\beta$.
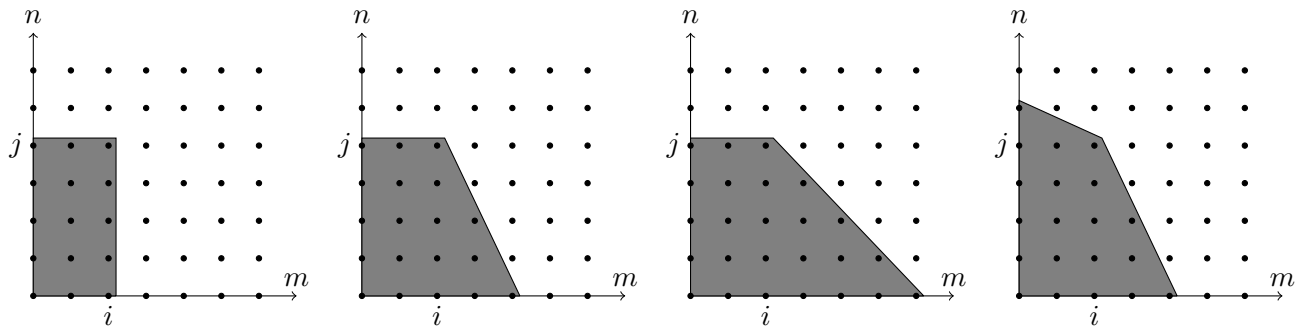
\begin{figure}[htbp]
\begin{center}
\begin{tikzpicture}[scale=0.5]
\draw[->] (0,0)--(7,0);\draw[->] (0,0)--(0,7);
\draw[fill=gray,opacity=0.1] (2.2,0)--(2.2,4.2) --(0,4.2) -- (0,0) -- (2.2,0);
\foreach \s in {0,1,...,6} {
\foreach \t in {0,1,...,6} {\draw [fill] (\s,\t-0) circle (0.07);}};
\draw (0,7) node[above] {$n$};
\draw (7,0) node[above] {$m$};
\draw (2,0) node[below] {$i$};
\draw (0,4) node[left] {$j$};
\end{tikzpicture}
\begin{tikzpicture}[scale=0.5]
\draw[->] (0,0)--(7,0);\draw[->] (0,0)--(0,7);
\draw[fill=gray,opacity=0.1] (4.2,0)--(2.2,4.2) --(0,4.2) -- (0,0) -- (4.2,0);
\foreach \s in {0,1,...,6} {
\foreach \t in {0,1,...,6} {\draw [fill] (\s,\t) circle (0.07);}};
\draw (0,7) node[above] {$n$};
\draw (7,0) node[above] {$m$};
\draw (2,0) node[below] {$i$};
\draw (0,4) node[left] {$j$};
\end{tikzpicture}
\begin{tikzpicture}[scale=0.5]
\draw[->] (0,0)--(7,0);\draw[->] (0,0)--(0,7);
\draw[fill=gray,opacity=0.1] (6.2,0)--(2.2,4.2) --(0,4.2) -- (0,0) -- (6.2,0);
\foreach \s in {0,1,...,6} {
\foreach \t in {0,1,...,6} {\draw [fill] (\s,\t) circle (0.07);}};
\draw (0,7) node[above] {$n$};
\draw (7,0) node[above] {$m$};
\draw (2,0) node[below] {$i$};
\draw (0,4) node[left] {$j$};
\end{tikzpicture}
\begin{tikzpicture}[scale=0.5]
\draw[->] (0,0)--(7,0);\draw[->] (0,0)--(0,7);
\draw[fill=gray,opacity=0.1] (4.2,0)--(2.2,4.2) --(0,5.2) -- (0,0) -- (4.2,0);
\foreach \s in {0,1,...,6} {
\foreach \t in {0,1,...,6} {\draw [fill] (\s,\t) circle (0.07);}};
\draw (0,7) node[above] {$n$};
\draw (7,0) node[above] {$m$};
\draw (2,0) node[below] {$i$};
\draw (0,4) node[left] {$j$};
\end{tikzpicture}
\end{center}
\begin{minipage}[t]{.24\linewidth}
\centering
\subcaption{$(\alpha,\beta)=(0,0)$}\label{fig:dom1} 
\end{minipage}
\begin{minipage}[t]{.24\linewidth}
\centering
\subcaption{$(\alpha,\beta)= (\frac{1}{2},0)$}\label{fig:dom2} 
\end{minipage}
\begin{minipage}[t]{.24\linewidth}
\centering
\subcaption{$(\alpha,\beta)= (1,0)$}\label{fig:dom3} 
\end{minipage}
\begin{minipage}[t]{.24\linewidth}
\centering
\subcaption{$(\alpha,\beta)= (\frac{1}{2},\frac{1}{2})$}\label{fig:dom4} 
\end{minipage}
\caption{
The points in the gray zone correspond to couple of integers $(m,n)$ smaller than $(i,j)$ for $\preceq_{(\alpha,\beta)}$ and for different values of $\alpha$ and $\beta$. \label{fig:dom}  }
\end{figure}

Thirdly, this leads to the following two definitions for bivariate polynomials and subsets of $\NN^2$.
\begin{defi}\label{def:compa}
A bivariate polynomial $v(x,y)$ is called $(\alpha,\beta)$-compatible of degree $(i,j)$ if the monomial $x^iy^j$ appears and all other monomials $x^my^n$ appearing are smaller than $x^iy^j$ for the order $\preceq_{(\alpha,\beta)}$.
\end{defi}
\begin{defi}\label{def:domain}
A subset $\cD$ of $\NN^2$ is called $(\alpha,\beta)$-compatible if for any $(i,j) \in \cD$, one gets 
\begin{equation} 
\Big( x^my^n\preceq_{(\alpha,\beta)} x^iy^j \Big) \Rightarrow  \Big( (m,n)\in \cD \Big).
\end{equation}
\end{defi}
\noindent In words, Definition \ref{def:domain} means that for $(i,j)\in \cD$, all $(m,n)$ such that $(m,n) \preceq_{(\alpha,\beta)} (i,j) $ are also in $\cD$ if the subset $\cD$ is $(\alpha,\beta)$-compatible, i.e. that $\cD$ is a downset of $(\mathbb{N}^2, \preceq_{(\alpha,\beta)})$.

Finally, we are in position to generalize the notion of $P$-polynomial association scheme.
\begin{defi} \label{def:bi}
Let $\cD \subset \NN^2$, $0 \leq \alpha\leq 1$, $0 \leq \beta<1$ and $\preceq_{(\alpha,\beta)}$ be the order \eqref{eq:partord}.
 The association scheme $\cZ$ is called bivariate $P$-polynomial of type $(\alpha,\beta)$ on the domain $\cD$ if these two conditions are satisfied:
 \begin{itemize}
  \item[(i)] there exists a relabeling of the adjacency matrices:
 \begin{equation}
  \{A_0,A_1,\dots, A_N\} = \{ A_{mn} \ |\ (m,n) \in \cD \},
 \end{equation}
such that, for $(i,j) \in \cD$,
\begin{equation}
 A_{ij}=v_{ij}(A_{10},A_{01})\,, \label{eq:vij}
\end{equation}
where  $v_{ij}(x,y)$ is a $(\alpha,\beta)$-compatible bivariate polynomial of degree $(i,j)$;
\item[(ii)] $\cD$ is $(\alpha,\beta)$-compatible.
 \end{itemize}
\end{defi}
Let us remark that the previous definition can also be given for other choices of the orders $\leq$ and $\preceq_{(\alpha,\beta)}$. However, all the examples 
we found (see Section \ref{sec:ex}) are in agreement with the definition given here. 
Let us also remark that the choice of $(\alpha,\beta)$ is not unique. In the following, we always choose $\alpha$ and $\beta$ as the smallest possible parameters. Finally, note that for simplicity we will sometimes omit mentioning explicitly the domain $\cD$ when discussing bivariate $P$-polynomial association schemes.

There are direct consequences of the previous definition:
\begin{itemize}
 \item The cardinality of $\cD$ is equal to $N+1$;
 \item $A_{00}=\II$, $\displaystyle \sum_{(i,j)\in\cD} A_{ij}=\JJ$;
 \item $A_{10}$ and $A_{01}$ generate the Bose--Mesner algebra;
 \item $v_{00}(x,y)=1$, $v_{10}(x,y)=x$ and $v_{01}(x,y)=y$;
 \item all monomials $ x^m y^n$ with non-zero coefficient appearing in $v_{ij}(x,y)$ are such that $(m,n)\in \cD$.
\end{itemize}
The bivariate polynomials $v_{ij}$ appearing in Definition \ref{def:bi} satisfy properties that are summarized in the following proposition and lemma.
\begin{prop}
	Let $\cZ$ be a bivariate $P$-polynomial association scheme of type $(\alpha,\beta)$ on the domain $\cD \subset \mathbb{N}^2$. Then, for all $(i,j)\in \cD$, the polynomial $v_{ij}(x,y)$ satisfying equation \eqref{eq:vij} is unique.
\end{prop}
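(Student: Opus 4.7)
The plan is to reduce this uniqueness statement to a clean dimension count that exploits the linear independence of the adjacency matrices. Let $V\subset \CC[x,y]$ be the vector space consisting of all polynomials whose monomial support is contained in $\cD$, so that $\dim V = |\cD|=N+1$ (a monomial basis being indexed by $\cD$). By the $(\alpha,\beta)$-compatibility of $v_{ij}$ combined with the $(\alpha,\beta)$-compatibility of $\cD$---this is precisely the fifth direct consequence listed just after Definition \ref{def:bi}---every polynomial satisfying the conditions of that definition lies in $V$.

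Next, I would consider the evaluation map $\phi \colon V \to \mathrm{Mat}_{v\times v}(\CC)$ defined by $\phi(p)=p(A_{10},A_{01})$; it is clearly linear. The key point is that $\phi$ is injective. Indeed, since $\cZ$ is bivariate $P$-polynomial, for every $(m,n)\in \cD$ one has a polynomial $v_{mn}$ of the required form, and by the argument of the previous paragraph $v_{mn}\in V$. Hence the image $\phi(V)$ contains the $N+1$ adjacency matrices $A_{mn}=\phi(v_{mn})$, $(m,n)\in\cD$. These matrices are linearly independent: they are non-zero $\{0,1\}$-matrices with pairwise disjoint supports summing to $\JJ$, which is a standard consequence of the association scheme axioms. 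Therefore $\dim \phi(V)\geq N+1 =\dim V$, and $\phi$ restricts to an isomorphism onto its image.

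To conclude, suppose $v_{ij}$ and $v'_{ij}$ both satisfy the conditions of Definition \ref{def:bi} for the same $(i,j)\in \cD$. Their difference $w=v_{ij}-v'_{ij}$ lies in $V$, and $\phi(w)=A_{ij}-A_{ij}=0$; injectivity of $\phi$ then forces $w=0$, i.e.\ $v_{ij}=v'_{ij}$. I do not foresee any serious obstacle: the entire argument boils down to observing that the $(\alpha,\beta)$-compatibility of $\cD$ confines every admissible polynomial to a vector space of dimension $N+1$, on which evaluation at $(A_{10},A_{01})$ is already known to produce $N+1$ linearly independent matrices. The only subtlety worth flagging is that the injectivity of $\phi$ uses the existence of the entire family $\{v_{mn}\}_{(m,n)\in\cD}$, not just the two candidate polynomials for the single index $(i,j)$.
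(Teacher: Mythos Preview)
Your proof is correct and follows essentially the same approach as the paper's. The only cosmetic difference is that the paper phrases the injectivity of your evaluation map $\phi$ as the linear independence of the matrices $A_{10}^mA_{01}^n$ for $(m,n)\in\cD$ (the images under $\phi$ of the monomial basis of $V$), which is deduced from the same dimension count you use.
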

\begin{proof}
	From the consequences listed above, for all $(i,j)\in \cD$ the polynomial $v_{ij}(x,y)$ of equation \eqref{eq:vij} is a linear combination of the monomials $x^my^n$ with $(m,n)\in \cD$. 
	Therefore, the Bose--Mesner algebra of the association scheme $\cZ$ is linearly generated by the matrices $A_{10}^mA_{01}^n$ with $(m,n) \in \cD$. 
	Since the cardinality of $\cD$ is equal to the dimension of the Bose--Mesner algebra, this generating set is linearly independent. 
	Suppose now that there is another $(\alpha,\beta)$-compatible bivariate polynomial of degree $(i,j)$ $v'_{ij}(x,y)\neq v_{ij}(x,y)$ such that $A_{ij}=v'_{ij}(A_{10},A_{01})$. 
	Since the monomials $x^my^n$ are linearly independent, this implies that there is a linear relation between the matrices $A_{10}^mA_{01}^n$ for $(m,n) \in \cD$, which contradicts their linear independence.    
\end{proof}
\begin{lem}\label{lem:xv}
 Let $v_{ij}(x,y)$ be the bivariate polynomials associated to a bivariate $P$-association scheme  of type $(\alpha,\beta)$ on $\cD \subset \NN^2$.
 For $(i,j)\in \cD$, there exist constants $\mu_{ij}^{mn}$ and $\nu_{ij}^{mn}$ such that
 \begin{eqnarray}
  x v_{i-1,j}(x,y) = \sum_{ (m,n)\preceq_{(\alpha,\beta)} (i,j) }\mu_{ij}^{mn}\  v_{mn}(x,y), \quad (i\geq 1)\label{eq:xvs}\\
   y v_{i,j-1}(x,y) = \sum_{ (m,n)\preceq_{(\alpha,\beta)} (i,j)}\nu_{ij}^{mn} \ v_{mn}(x,y), \quad (j\geq 1).\label{eq:yvs}
 \end{eqnarray}
\end{lem}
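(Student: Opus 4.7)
The plan is to interpret \eqref{eq:xvs} as a polynomial identity: first show that $xv_{i-1,j}(x,y)$ is a polynomial supported on the downset $\cD_{(i,j)}:=\{(m,n)\in\NN^2:(m,n)\preceq_{(\alpha,\beta)}(i,j)\}$, and then verify that the family $\{v_{mn}(x,y):(m,n)\in\cD_{(i,j)}\}$ forms a basis of the space of polynomials supported on $\cD_{(i,j)}$. The coefficients $\mu_{ij}^{mn}$ then arise as the unique coordinates in this expansion.

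For the first step, by Definition \ref{def:compa} the polynomial $v_{i-1,j}(x,y)$ is a linear combination of monomials $x^ay^b$ with $(a,b)\preceq_{(\alpha,\beta)}(i-1,j)$. The stability property \eqref{monomial-order} sends each such $(a,b)$ to $(a+1,b)\preceq_{(\alpha,\beta)}(i,j)$, so every monomial of $xv_{i-1,j}(x,y)$ is indexed by an element of $\cD_{(i,j)}$. Condition (ii) of Definition \ref{def:bi} together with $(i,j)\in\cD$ ensures $\cD_{(i,j)}\subseteq\cD$, so that all the $v_{mn}$'s that are candidates to appear on the right-hand side of \eqref{eq:xvs} are well-defined.

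For the second step, set $V_{(i,j)}:=\mathrm{span}\{x^my^n:(m,n)\in\cD_{(i,j)}\}$, a vector space of dimension $|\cD_{(i,j)}|$. Each $v_{mn}$ with $(m,n)\in\cD_{(i,j)}$ is $(\alpha,\beta)$-compatible of degree $(m,n)$, and by transitivity of $\preceq_{(\alpha,\beta)}$ all its monomials lie in $\cD_{(i,j)}$, so $v_{mn}\in V_{(i,j)}$. These $|\cD_{(i,j)}|$ polynomials carry pairwise distinct deg-lex leading monomials $x^my^n$, so a standard triangular argument gives linear independence and they form a basis of $V_{(i,j)}$.

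Combining the two steps, $xv_{i-1,j}\in V_{(i,j)}$ admits a unique expansion in this basis, which is exactly \eqref{eq:xvs}. The proof of \eqref{eq:yvs} is identical after exchanging the roles of $x$ and $y$ and invoking the second implication in \eqref{monomial-order}. The only mildly delicate point is ensuring that $(\alpha,\beta)$-compatibility of $\cD$ is used precisely where required, namely to guarantee $\cD_{(i,j)}\subseteq\cD$ so that the candidate basis exists; no deeper obstacle seems to arise.
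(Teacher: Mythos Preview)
Your proof is correct and follows essentially the same approach as the paper: both use the $(\alpha,\beta)$-compatibility of $\cD$ to ensure $\cD_{(i,j)}\subseteq\cD$, establish that $\{v_{mn}:(m,n)\in\cD_{(i,j)}\}$ spans the same space as the corresponding monomials, and then use \eqref{monomial-order} to place $xv_{i-1,j}$ in that span. Your version is slightly more explicit in justifying the span equality via the triangular (distinct leading monomials) argument, whereas the paper simply asserts the equality of spans.
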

\proof Let $(i,j)\in \cD$. Any couple $(m,n)$ such that $(m,n)\preceq_{(\alpha,\beta)} (i,j)$ is also in $\cD$ by $(ii)$ of Definition \ref{def:bi}. 
Then, by the fact that $v_{mn}(x,y)$ are $(\alpha,\beta)$-compatible, the only monomials appearing in all the polynomials $v_{mn}(x,y)$ for $(m,n)\preceq_{(\alpha,\beta)} (i,j)$ are $x^my^n$ 
for $(m,n)\preceq_{(\alpha,\beta)} (i,j)$. Therefore, one gets
\begin{equation}
 \text{span}( v_{mn}(x,y)\ | \ (m,n)\preceq_{(\alpha,\beta)} (i,j)  ) = \text{span}( x^my^n\ | \ (m,n)\preceq_{(\alpha,\beta)} (i,j) ). \label{eq:sspan}
\end{equation}
Remarking that all the monomials $x^my^n$ present in $x v_{i-1,j}(x,y)$ satisfy the condition $(m,n)\preceq_{(\alpha,\beta)} (i,j)$ (see relation \eqref{monomial-order}) 
and using \eqref{eq:sspan}, one gets the equality \eqref{eq:xvs}.
Relation \eqref{eq:yvs} is proven similarly.
\endproof

\subsection{$(\alpha,\beta)$-metric association scheme \label{sec:met}}

It is well-known that for an association scheme the $P$-polynomial property is equivalent to the metric one, \textit{i.e.} that the intersection numbers satisfy the following constraints:
\begin{itemize}
 \item $p_{1j}^{j+1}\neq 0$ and $p_{1j}^{j-1}\neq 0$,
 \item $\left( p_{ij}^k\neq 0\right) \Rightarrow \left(|i-j|\leq  k \leq i+j\right)$.
\end{itemize}
For a bivariate $P$-polynomial association scheme, the intersection numbers read as follows
\begin{equation}
 A_{ij}A_{k\ell}= \sum_{(m,n)\in \cD}  p_{ij,k\ell}^{mn}\, A_{mn}.  \label{eq:intern}
\end{equation}
This subsection aims to generalize the  metric notion to bivariate $P$-polynomial association schemes.

If the polynomials $v_{ij}$ are the bivariate polynomials associated to a bivariate $P$-polynomial association scheme of type $(\alpha,\beta)$,
the intersection numbers corresponding to this association scheme are constrained as explained in the following proposition.
\begin{prop}\label{pr:z1} Let $\cZ=\{ A_{ij} \ |\ (i,j) \in \cD \}$ be an association scheme. The statements $(i)$ and $(ii)$ are equivalent:
\begin{itemize}
\item[(i)] $\cZ$ is a bivariate $P$-polynomial association scheme of type $(\alpha,\beta)$ on $\cD$;
  \item[(ii)] $\cD$ is $(\alpha,\beta)$-compatible and the intersection numbers satisfy, for  $(i,j),(i+1,j) \in \cD$,
  \begin{eqnarray}
   && p_{10,ij}^{i+ 1 , j}\neq 0,\ \  p_{10,i+1j}^{i , j}\neq 0, \label{eq:cm1} \\
   &&p_{10,ij}^{mn}\neq 0 \quad\left( \text{or}\ \ p_{10,mn}^{ij}\neq 0 \right)\quad \Rightarrow \quad   (m,n)\preceq_{(\alpha,\beta)} (i+1,j),\label{eq:cm3}
  \end{eqnarray}
  and, for  $(i,j),(i,j+1) \in \cD$,
    \begin{eqnarray}
   && p_{01,ij}^{i, j+1}\neq 0,\ \ p_{01,ij+1}^{i , j}\neq 0,  \label{eq:cm2}\\
 && p_{01,ij}^{mn}\neq 0 \quad\left(\text{or}\ \ p_{01,mn}^{ij}\neq 0\right) \quad \Rightarrow \quad   (m,n)\preceq_{(\alpha,\beta)} (i,j+1). \label{eq:cm4}
  \end{eqnarray}
\end{itemize}
\end{prop}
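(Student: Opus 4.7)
The two implications are proven separately: $(i)\Rightarrow(ii)$ follows from Lemma~\ref{lem:xv} combined with the classical intersection-number symmetry of commutative symmetric schemes, and $(ii)\Rightarrow(i)$ is proven by an inductive construction of the polynomials $v_{ij}(x,y)$ along the order $\preceq_{(\alpha,\beta)}$.

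For $(i)\Rightarrow(ii)$, the $(\alpha,\beta)$-compatibility of $\cD$ is immediate. For the intersection-number constraints, I would fix $(i,j),(i+1,j)\in\cD$ and compute the product $A_{10}A_{ij}$ in two ways:
\[
\sum_{(m,n)\in\cD} p_{10,ij}^{mn}\,A_{mn}\;=\;A_{10}A_{ij}\;=\;x\,v_{ij}(A_{10},A_{01})\;=\;\sum_{(m,n)\preceq_{(\alpha,\beta)}(i+1,j)}\mu_{i+1,j}^{mn}\,A_{mn},
\]
where Lemma~\ref{lem:xv} applied at the index $(i+1,j)\in\cD$ yields the last equality. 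The linear independence of the $A_{mn}$ forces $p_{10,ij}^{mn}=\mu_{i+1,j}^{mn}$ on the allowed set and $p_{10,ij}^{mn}=0$ otherwise, which gives the first part of \eqref{eq:cm3}; the nonvanishing of $p_{10,ij}^{i+1,j}$ in \eqref{eq:cm1} then follows since $\mu_{i+1,j}^{i+1,j}$ is the leading coefficient of $x\,v_{ij}(x,y)$, which is nonzero because $v_{ij}$ is $(\alpha,\beta)$-compatible of degree $(i,j)$. To obtain the remaining claims, namely the nonvanishing of $p_{10,i+1,j}^{i,j}$ and the inequality $(i,j)\preceq_{(\alpha,\beta)}(m+1,n)$, I would invoke the standard identity $k_{mn}\,p_{10,ij}^{mn}=k_{ij}\,p_{10,mn}^{ij}$ valid in any commutative symmetric scheme, which yields the equivalence $p_{10,ij}^{mn}\neq 0 \Leftrightarrow p_{10,mn}^{ij}\neq 0$, and then apply the first argument with the roles of $(i,j)$ and $(m,n)$ swapped. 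The conditions \eqref{eq:cm2} and \eqref{eq:cm4} are obtained from the completely parallel computation for $A_{01}$.

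For $(ii)\Rightarrow(i)$, I would construct the polynomials $v_{ij}(x,y)$ by induction along the order $\preceq_{(\alpha,\beta)}$, starting from $v_{00}=1$, $v_{10}=x$, $v_{01}=y$. For $(i,j)\in\cD\setminus\{(0,0)\}$, assume without loss of generality that $i\geq 1$ (the case $i=0$, $j\geq 1$ is handled by the symmetric $y$-recurrence and conditions \eqref{eq:cm2}, \eqref{eq:cm4}); the $(\alpha,\beta)$-compatibility of $\cD$ then gives $(i-1,j)\in\cD$. Hypotheses \eqref{eq:cm1} and \eqref{eq:cm3} applied to the pair $(i-1,j),(i,j)$ guarantee $p_{10,i-1,j}^{i,j}\neq 0$ and that every $(m,n)$ with $p_{10,i-1,j}^{mn}\neq 0$ satisfies $(m,n)\preceq_{(\alpha,\beta)}(i,j)$. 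Solving $A_{10}A_{i-1,j}=\sum_{(m,n)\in\cD}p_{10,i-1,j}^{mn}\,A_{mn}$ for $A_{ij}$ and substituting the inductive expressions leads to
\[
v_{ij}(x,y)\;:=\;\frac{1}{p_{10,i-1,j}^{i,j}}\!\left(x\,v_{i-1,j}(x,y)-\!\!\sum_{\substack{(m,n)\prec_{(\alpha,\beta)}(i,j)\\ (m,n)\in\cD}}\!\! p_{10,i-1,j}^{mn}\,v_{mn}(x,y)\right)\!.
\]
A direct monomial inspection then shows that the leading term $x^iy^j$ originates solely from $x\,v_{i-1,j}$ and carries a nonzero coefficient, while every other monomial that appears has index $\preceq_{(\alpha,\beta)}(i,j)$ (using \eqref{monomial-order} for the $x\,v_{i-1,j}$ part and the inductive $(\alpha,\beta)$-compatibility for the remaining terms); hence $v_{ij}$ is $(\alpha,\beta)$-compatible of degree $(i,j)$, which closes the induction and verifies Definition~\ref{def:bi}.

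The main obstacle I anticipate lies in the second conclusions of \eqref{eq:cm3} and \eqref{eq:cm4} in the direction $(i)\Rightarrow(ii)$, precisely in the case where the shifted pair $(m+1,n)$, respectively $(m,n+1)$, does not belong to $\cD$: the symmetry-plus-Lemma~\ref{lem:xv} argument applies cleanly only when the shifted pair is in $\cD$. Handling the complementary case will require a more delicate analysis of the expansion of $A_{10}A_{mn}=x\,v_{mn}(A_{10},A_{01})$ in the Bose--Mesner algebra, exploiting the downward closure of $\cD$ together with the basis property of the $A_{mn}$ to control the support of $p_{10,mn}^{rs}$ beyond the indices covered by the lemma.
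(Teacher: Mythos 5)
Your proof follows essentially the same route as the paper's: the forward direction is Lemma~\ref{lem:xv} plus linear independence of the $A_{mn}$ plus the symmetry $p_{10,ij}^{mn}=0\Leftrightarrow p_{10,mn}^{ij}=0$ of a symmetric scheme, and the converse is the same induction that solves the expansion of $A_{10}A_{i-1,j}$ (resp.\ $A_{01}A_{i,j-1}$) for $A_{ij}$ and checks $(\alpha,\beta)$-compatibility via \eqref{monomial-order}. As for the obstacle you flag at the end, the paper performs no additional analysis for the case where $(m+1,n)\notin\cD$ --- it simply asserts that the symmetry property ``leads to'' \eqref{eq:cm1} and \eqref{eq:cm3} --- so your argument is at least as complete as the published one on that point.
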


\proof $(i)\Rightarrow (ii)$:  From Lemma \ref{lem:xv}, relation \eqref{eq:xvs} holds and replacing $x$ and $y$ by $A_{10}$ and $A_{01}$ in it, one gets, for $(i,j)\in \cD$:
\begin{equation}
 A_{10} A_{i-1 j}= \sum_{ (m,n)\preceq_{(\alpha,\beta)} (i,j) }\alpha_{ij}^{mn}\ A_{mn}, \quad (i\geq 1).
\end{equation}
Comparing this equation with  \eqref{eq:intern} and knowing that the matrices $A_{ij}$ are independent, the following constraints on $p_{ij,k\ell}^{mn}$ hold:
\begin{equation}
 p_{10,i-1j}^{ij}\neq 0\quad \text{and} \quad \left( p_{10,i-1j}^{mn}\neq 0\right)\quad \Rightarrow \quad  \left( (m,n)\preceq_{(\alpha,\beta)} (i,j)\right).
\end{equation}
Since the association scheme is symmetric, the intersection numbers satisfy the following symmetry property: $p_{10,ij}^{mn}=0\  \Leftrightarrow \  p_{10,mn}^{ij}=0$.
This leads to relations \eqref{eq:cm1} and \eqref{eq:cm3}. Relations \eqref{eq:cm2} and \eqref{eq:cm4} are proven similarly starting from relation \eqref{eq:yvs} of Lemma \ref{lem:xv}.
\\
 $(ii)\Rightarrow (i)$: We use induction on $\leq$ to check that $A_{ij}=v_{ij}(A_{10},A_{01})$ with $v_{ij}$ being $(\alpha,\beta)$-compatible of degree $(i,j)$.
It is immediate for $i+j=1$. Now assume that $i\geq 1$. Then we have, using (\ref{eq:cm3}),
\begin{equation}
 A_{10}A_{i-1,j}=p_{10,i-1j}^{ij}A_{ij}+\sum_{(m,n)\preceq_{(\alpha,\beta)} (i,j)}p_{10,i-1j}^{mn}A_{mn}\ .
\end{equation}
Condition (\ref{eq:cm1}) ensures that $A_{ij}$ appears with a non-zero coefficient, so that this relation can be used for expressing $A_{ij}$ in terms of $A_{10}A_{i-1,j}$ and 
$A_{mn}$ with $(m,n)\preceq_{(\alpha,\beta)} (i,j)$. Since $(m,n)\preceq_{(\alpha,\beta)} (i,j)$ implies $(m,n)\leq (i,j)$, we can use the induction hypothesis on those $A_{mn}$ 
and clearly also on $A_{i-1,j}$. So we have that $A_{ij}$ is expressed as a polynomial $v_{ij}(x,y)$ evaluated in $A_{10},A_{01}$. 
Since $x^{i-1}y^j$ appears with a non-zero coefficient in $v_{i-1,j}$, we have that  $x^{i}y^j$ appears with a non-zero coefficient in $v_{ij}(x,y)$. 
The fact that $v_{ij}(x,y)$ is indeed $(\alpha,\beta)$-compatible follows now easily from the transitivity of $\preceq_{(\alpha,\beta)}$ and the property (\ref{monomial-order}).
If $i=0$, we can then assume that $j\geq1$ and use the same argument starting from $A_{01}A_{i,j-1}$, using now conditions (\ref{eq:cm2})-(\ref{eq:cm4}).
\endproof
For different choices of $\alpha$ and $\beta$ corresponding to the ones shown in Figure \ref{fig:dom}, the domains 
where $p_{10,ij}^{mn}$ and $p_{01,ij}^{mn}$ may be non-zero are displayed in Figure \ref{fig:recu}.
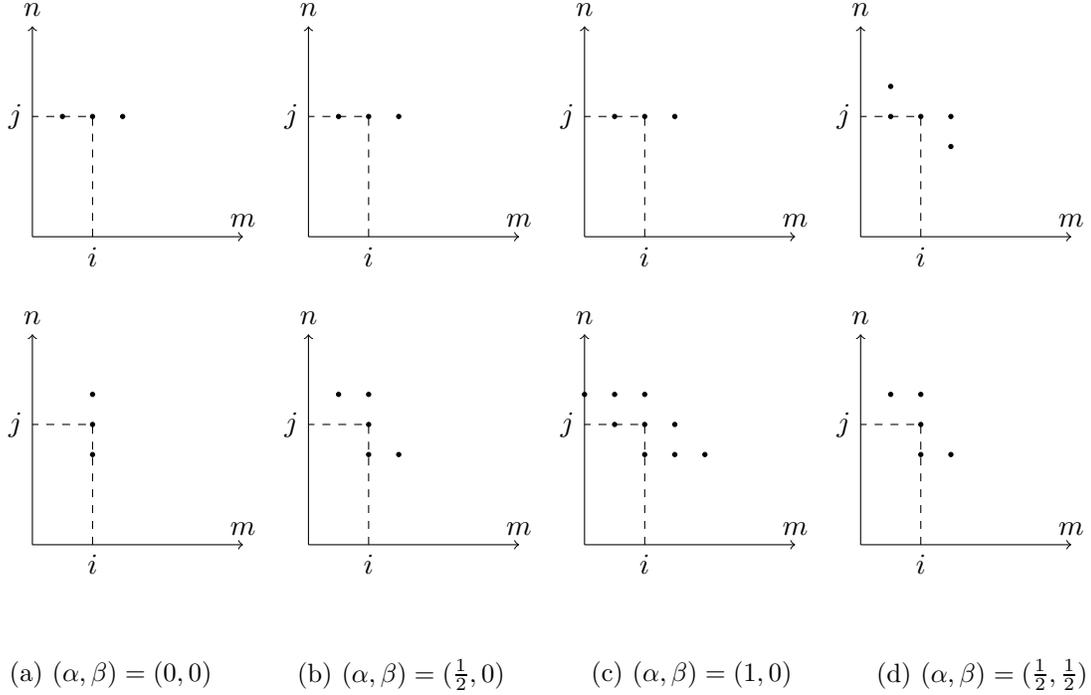
\begin{figure}[htbp]
\begin{center}
\begin{tikzpicture}[scale=0.4]
\draw[->] (0,0)--(7,0);\draw[->] (0,0)--(0,7);
\draw [fill] (2,4) circle (0.07);
\draw [fill] (3,4) circle (0.07);
\draw [fill] (1,4) circle (0.07);
\draw (0,7) node[above] {$n$};
\draw (7,0) node[above] {$m$};
\draw (2,0) node[below] {$i$};
\draw (0,4) node[left] {$j$};
\draw[dashed,thin] (2,0)--(2,4) --(0,4);
\end{tikzpicture}
\begin{tikzpicture}[scale=0.4]
\draw[->] (0,0)--(7,0);\draw[->] (0,0)--(0,7);
\draw [fill] (2,4) circle (0.07);
\draw [fill] (3,4) circle (0.07);
\draw [fill] (1,4) circle (0.07);
\draw (0,7) node[above] {$n$};
\draw (7,0) node[above] {$m$};
\draw (2,0) node[below] {$i$};
\draw (0,4) node[left] {$j$};
\draw[dashed,thin] (2,0)--(2,4) --(0,4);
\end{tikzpicture}
\begin{tikzpicture}[scale=0.4]
\draw[->] (0,0)--(7,0);\draw[->] (0,0)--(0,7);
\draw [fill] (2,4) circle (0.07);
\draw [fill] (3,4) circle (0.07);
\draw [fill] (1,4) circle (0.07);
\draw (0,7) node[above] {$n$};
\draw (7,0) node[above] {$m$};
\draw (2,0) node[below] {$i$};
\draw (0,4) node[left] {$j$};
\draw[dashed,thin] (2,0)--(2,4) --(0,4);
\end{tikzpicture}
\begin{tikzpicture}[scale=0.4]
\draw[->] (0,0)--(7,0);\draw[->] (0,0)--(0,7);
\draw [fill] (2,4) circle (0.07);
\draw [fill] (3,4) circle (0.07);
\draw [fill] (1,4) circle (0.07);
\draw [fill] (1,5) circle (0.07);
\draw [fill] (3,3) circle (0.07);
\draw (0,7) node[above] {$n$};
\draw (7,0) node[above] {$m$};
\draw (2,0) node[below] {$i$};
\draw (0,4) node[left] {$j$};
\draw[dashed,thin] (2,0)--(2,4) --(0,4);
\end{tikzpicture}

\end{center}

\begin{center}
\begin{tikzpicture}[scale=0.4]
\draw[->] (0,0)--(7,0);\draw[->] (0,0)--(0,7);
\draw [fill] (2,4) circle (0.07);
\draw [fill] (2,3) circle (0.07);
\draw [fill] (2,5) circle (0.07);
\draw (0,7) node[above] {$n$};
\draw (7,0) node[above] {$m$};
\draw (2,0) node[below] {$i$};
\draw (0,4) node[left] {$j$};
\draw[dashed,thin] (2,0)--(2,4) --(0,4);
\end{tikzpicture}
\begin{tikzpicture}[scale=0.4]
\draw[->] (0,0)--(7,0);\draw[->] (0,0)--(0,7);
\draw [fill] (2,4) circle (0.07);
\draw [fill] (2,3) circle (0.07);
\draw [fill] (2,5) circle (0.07);
\draw [fill] (3,3) circle (0.07);
\draw [fill] (1,5) circle (0.07);
\draw (0,7) node[above] {$n$};
\draw (7,0) node[above] {$m$};
\draw (2,0) node[below] {$i$};
\draw (0,4) node[left] {$j$};
\draw[dashed,thin] (2,0)--(2,4) --(0,4);
\end{tikzpicture}
\begin{tikzpicture}[scale=0.4]
\draw[->] (0,0)--(7,0);\draw[->] (0,0)--(0,7);
\draw [fill] (2,4) circle (0.07);
\draw [fill] (2,3) circle (0.07);

\draw [fill] (3,4) circle (0.07);

\draw [fill] (1,4) circle (0.07);

\draw [fill] (2,5) circle (0.07);
\draw [fill] (3,3) circle (0.07);
\draw [fill] (1,5) circle (0.07);
\draw [fill] (4,3) circle (0.07);
\draw [fill] (0,5) circle (0.07);
\draw (0,7) node[above] {$n$};
\draw (7,0) node[above] {$m$};
\draw (2,0) node[below] {$i$};
\draw (0,4) node[left] {$j$};
\draw[dashed,thin] (2,0)--(2,4) --(0,4);
\end{tikzpicture}
\begin{tikzpicture}[scale=0.4]
\draw[->] (0,0)--(7,0);\draw[->] (0,0)--(0,7);
\draw [fill] (2,4) circle (0.07);
\draw [fill] (2,3) circle (0.07);
\draw [fill] (2,5) circle (0.07);
\draw [fill] (3,3) circle (0.07);
\draw [fill] (1,5) circle (0.07);
\draw (0,7) node[above] {$n$};
\draw (7,0) node[above] {$m$};
\draw (2,0) node[below] {$i$};
\draw (0,4) node[left] {$j$};
\draw[dashed,thin] (2,0)--(2,4) --(0,4);
\end{tikzpicture}

\end{center}

\begin{center}
\begin{minipage}[t]{.21\linewidth}
\centering
\subcaption{$(\alpha,\beta)=(0,0)$}\label{fig:recu1} 
\end{minipage}
\begin{minipage}[t]{.21\linewidth}
\centering
\subcaption{$(\alpha,\beta)= (\frac{1}{2},0)$}\label{fig:recu2} 
\end{minipage}
\begin{minipage}[t]{.21\linewidth}
\centering
\subcaption{$(\alpha,\beta)= (1,0)$}\label{fig:recu3} 
\end{minipage}
\begin{minipage}[t]{.21\linewidth}
\centering
\subcaption{$(\alpha,\beta)= (\frac{1}{2},\frac{1}{2})$}\label{fig:recu4} 
\end{minipage}
\end{center}

\caption{The coordinate $(m,n)$ of the dots in the graphs at the top (resp. bottom) line represent when $p_{10,ij}^{mn}$ (resp. $p_{01,ij}^{mn}$) may be non-zero for different values of $(\alpha,\beta)$. \label{fig:recu}  }
\end{figure}
Note that by \eqref{eq:intern}, the constraints on the intersection numbers given in item $(ii)$ of Proposition \ref{pr:z1} can be equivalently viewed 
as constraints on the terms $A_{mn}$ with $(m,n) \in \cD$ appearing in the products $A_{10}A_{ij}$ and $A_{01}A_{ij}$. 
The expansions of these two products in terms of the matrices $A_{mn}$ correspond to the recurrence relations satisfied by the bivariate polynomials $v_{ij}$. 
One can read the type of these recurrence relations in Figure \ref{fig:recu}. Indeed, for example the case displayed in Figure \ref{fig:recu2} corresponds to the recurrence relations of the form:
\begin{eqnarray}
 x\,v_{ij}(x,y)&=&p_{10,ij}^{i+1j}\, v_{i+1j}(x,y)+p_{10,ij}^{ij}\, v_{ij}(x,y)+p_{10,ij}^{i-1j}\,v_{i-1j}(x,y),\\
 y\,v_{ij}(x,y)&=&p_{01,ij}^{ij+1}\,v_{ij+1}(x,y)+p_{01,ij}^{ij}\, v_{ij}(x,y)+p_{01,ij}^{ij-1}\,v_{ij-1}(x,y)\nonumber\\
&& +p_{01,ij}^{i-1j+1}\,v_{i-1j+1}(x,y)+p_{01,ij}^{i+1j-1}\, v_{i+1j-1}(x,y).
\end{eqnarray}

This proposition leads to the following definition.
\begin{defi}
The association scheme $\cZ=\{ A_{ij} \ |\ (i,j) \in \cD \}$ is called $(\alpha,\beta)$-metric inside the domain $\cD$ if $\cD$ is $(\alpha,\beta)$-compatible and if the associated intersection numbers $p_{ij,k\ell}^{mn}$ satisfy the conditions \eqref{eq:cm1}-\eqref{eq:cm4}.
\end{defi}
With the above definition, Proposition \ref{pr:z1} can be reformulated like this: 
an association scheme is $(\alpha,\beta)$-metric on $\cD$ if and only if it is bivariate $P$-polynomial of type $(\alpha,\beta)$ on $\cD$.

\subsection{Eigenvalues and idempotents \label{sec:idem}}

Let $\cZ=\{ A_{ij} \ |\ (i,j) \in \cD \}$ be a bivariate $P$-association scheme of  type $(\alpha,\beta)$.
Since the matrices $A_{ij}$ are pairwise commuting, they can be diagonalized in the same basis. 
The vector space $V$ of dimension $\bv$, on which the adjacency matrices act, can be decomposed as follows 
\begin{equation}
 V=\bigoplus_{\lambda \in \cD^\star} V_{\lambda}\,,
\end{equation}
where $\cD^\star$ is a set of labels with the same cardinality as $\cD$ (\textit{i.e.} $|\cD^\star|=|\cD|$) and $V_{\lambda}$ is a common eigenspace for all the matrices $A_{ij}$.
$\cD^\star$ denotes a choice of labelling for the common eigenspace and will mostly be a subset of $\mathbb{N}^2$ in the following sections. That $|\cD^\star|=|\cD|$ corresponds to the fact that the matrices $A_{ij}$ are linearly independent.  Since the sum of $A_{ij}$ is equal to $\JJ$, the common eigenspace containing the vector $(1,1,\dots,1)$ is of dimension 1. So we can take $\lambda_0 \in\cD^{\star}$ with $V_{\lambda_0}=\text{span}(1,1,\dots,1)$.

As usual in the context of association schemes, we denote by $E_{\lambda}$ with $\lambda \in \cD^\star$ the projector on the corresponding eigenspace: $E_{\lambda}V=V_{\lambda}$. They satisfy
\begin{eqnarray}
 &&E_{\lambda}E_{\lambda'}=\delta_{\lambda,\lambda'} E_{\lambda}\ , \qquad \sum_{\lambda \in \cD^\star} E_{\lambda}=\II \ , \qquad E_{\lambda_0}=\frac{1}{\bv} \JJ,\\
 &&  A_{ij} =\sum_{\lambda \in \cD^\star} p_{ij}(\lambda) E_{\lambda}\,, \label{eq:AE}
\end{eqnarray}
with $p_{ij}(\lambda) $ the eigenvalues of $A_{ij}$ in the subspace $V_{\lambda}$. The idempotents $E_{\lambda}$ also form a basis of the Bose--Mesner algebra.

With $\theta_{\lambda}=p_{10}(\lambda)$ and $\mu_{\lambda}=p_{01}(\lambda)$ the eigenvalues of $A_{10}$ and $A_{01}$, respectively, one gets
\begin{equation}
 A_{10} E_{\lambda}= \theta_{\lambda} E_{\lambda}\ , \quad A_{01} E_{\lambda}= \mu_{\lambda} E_{\lambda}\,.
\end{equation}
Since $A_{10}$ and $A_{01}$ generate the whole Bose--Mesner algebra, their eigenvalues characterize the eigenspaces  $V_{\lambda}$ \textit{i.e.} the couples 
$(\theta_{\lambda} ,\mu_{\lambda})$, $\lambda\in \cD^\star$, are different pairwise.

\begin{prop}\label{pr:z2}
 The eigenvalues $p_{ij}(\lambda) $ associated to a bivariate $P$-polynomial association scheme of type $(\alpha,\beta)$ satisfy
 \begin{equation}
  p_{ij}(\lambda)=v_{ij}(\theta_{\lambda},  \mu_{\lambda})\,, \label{eq:pv}
 \end{equation}
where $v_{ij}(x,y)$ is the bivariate polynomial of Definition \ref{def:bi}. 

Reciprocally, if an association scheme $\{A_{ij} \ | \  (i,j)\in \cD \} $ with $\cD$ $(\alpha,\beta)$-compatible has eigenvalues
satisfying \eqref{eq:pv} where $v_{ij}(x,y)$ is a bivariate polynomial $(\alpha,\beta)$-compatible of degree $(i,j)$, 
this scheme is a bivariate $P$-polynomial association scheme of type $(\alpha,\beta)$.
\end{prop}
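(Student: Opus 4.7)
The plan is to prove the two implications separately, exploiting the spectral decomposition of the Bose--Mesner algebra in both directions.

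For the forward direction, I would start from Definition \ref{def:bi}, which gives $A_{ij}=v_{ij}(A_{10},A_{01})$. Since $A_{10}$ and $A_{01}$ commute and are simultaneously diagonalized by the common eigenspaces $V_{mn}$, they act on $V_{mn}$ as the scalars $\theta_{mn}$ and $\mu_{mn}$ respectively. Consequently any polynomial in $A_{10}$ and $A_{01}$ acts on $V_{mn}$ as the same polynomial evaluated at $(\theta_{mn},\mu_{mn})$; in particular $A_{ij}E_{mn}=v_{ij}(\theta_{mn},\mu_{mn})E_{mn}$. Comparing with the spectral decomposition \eqref{eq:AE}, $A_{ij}E_{mn}=p_{ij}(mn)E_{mn}$, and using $E_{mn}\ne 0$, one reads off $p_{ij}(mn)=v_{ij}(\theta_{mn},\mu_{mn})$.

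For the reciprocal direction, I would again use the spectral decomposition but in reverse. Given the hypothesis $p_{ij}(mn)=v_{ij}(\theta_{mn},\mu_{mn})$, substitute into \eqref{eq:AE} to obtain
\begin{equation}
A_{ij}=\sum_{(m,n)\in\cD^{\star}} v_{ij}(\theta_{mn},\mu_{mn})\,E_{mn}.
\end{equation}
On the other hand, since the $E_{mn}$ are mutually orthogonal idempotents summing to $\II$, any polynomial evaluated at $A_{10}=\sum\theta_{mn}E_{mn}$ and $A_{01}=\sum\mu_{mn}E_{mn}$ satisfies
\begin{equation}
v_{ij}(A_{10},A_{01})=\sum_{(m,n)\in\cD^{\star}} v_{ij}(\theta_{mn},\mu_{mn})\,E_{mn}.
\end{equation}
Hence $A_{ij}=v_{ij}(A_{10},A_{01})$. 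Combined with the hypotheses that $\cD$ is $(\alpha,\beta)$-compatible and that $v_{ij}$ is $(\alpha,\beta)$-compatible of degree $(i,j)$, both conditions of Definition \ref{def:bi} are satisfied, so $\cZ$ is bivariate $P$-polynomial of type $(\alpha,\beta)$ on $\cD$.

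Neither direction presents a serious obstacle: the whole argument rests on the elementary observation that evaluating a bivariate polynomial in two commuting matrices diagonal in a common eigenbasis amounts to evaluating the polynomial scalar-wise on each eigenspace. The only point worth being careful about is that in the reciprocal direction the polynomial $v_{ij}$ is prescribed by hypothesis rather than constructed, so one should just note that the equality $A_{ij}=v_{ij}(A_{10},A_{01})$ holds as matrices regardless of whether alternative polynomial expressions exist for $A_{ij}$; the $(\alpha,\beta)$-compatibility of $v_{ij}$ is what supplies clause (i) of Definition \ref{def:bi}.
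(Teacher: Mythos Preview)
Your proof is correct and follows the same approach as the paper, which simply notes that the result is a direct consequence of the spectral decomposition \eqref{eq:AE} together with $A_{ij}=v_{ij}(A_{10},A_{01})$. You have merely unpacked this one-line remark into its two directions, which is fine.
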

\proof This result is a direct consequence of equation \eqref{eq:AE} and $A_{ij}=v_{ij}(A_{10},A_{01})$.\endproof

Relation \eqref{eq:AE} can be inverted and one gets
\begin{eqnarray}
E_{\lambda} =\frac{1}{\bv}\sum_{(i,j) \in \cD} q_{\lambda}(ij) A_{ij}\ . \label{eq:EA}
\end{eqnarray}
The parameters $q_{\lambda}(ij)$ are called dual eigenvalues.

\section{Examples of bivariate $P$-polynomial association schemes \label{sec:ex}}

It is obvious that all the association schemes with two classes $A_0,A_1,A_2$ are bivariate $P$-polynomial by setting $A_{10}=A_1$ and  $A_{01}=A_2$.
In the following subsections, a number of examples are given.

\subsection{Direct product of $P$-polynomial association schemes \label{sec:ex1}}

Let $A_0,\dots,A_D$ define an association scheme with intersection numbers $p_{ij}^k$ and let $\tilde{A}_0,\dots,\tilde{A}_{\widetilde{D}}$ provide another association scheme with intersection numbers $\widetilde{p}_{ij}^k$. 
The direct product is the association scheme defined by the Kronecker product of matrices:
\begin{equation}
 A_{ij}=A_i\otimes \tilde{A}_j\ ,\ \ \ \text{for } (i,j)\in \{0,\dots,D\}\times\{0,\dots,\widetilde{D}\} \,.
\end{equation}
Its intersection numbers are $p_{ij,kl}^{mn}=p_{ik}^m\, \widetilde{p}_{jl}^n$.

Assume that both association schemes are $P$-polynomial, so that we have:
\begin{equation} 
 A_i=v_i(A_1)\ \ \ \text{and}\ \ \ \tilde{A}_j=\tilde{v}_j(\tilde{A}_1)\ ,
\end{equation}
where $v_i$ (respectively, $\tilde{v}_j$) is a polynomial of degree $i$ (respectively, of degree $j$). We obtain immediately that the direct product is a  bivariate $P$-polynomial association scheme of type $(0,0)$, 
since we have:
\begin{equation}
 A_{ij}=v_{ij}(A_{10},A_{01})\,,\ \ \ \ \text{where }\  v_{ij}(x,y)=v_i(x)\tilde{v}_j(y). 
\end{equation}
The recurrence relations are given by
\begin{eqnarray}
 A_{10}A_{ij}=p_{1i}^{i-1}A_{i-1,j}+p_{1i}^{i}A_{ij}+p_{1i}^{i+1}A_{i+1,j}\,,\\
 A_{01}A_{ij}=\tilde{p}_{1j}^{j-1}A_{i,j-1}+\tilde{p}_{1j}^{j}A_{ij}+\tilde{p}_{1j}^{j+1}A_{i,j+1}\ .
\end{eqnarray}

\subsection{Symmetrization of association scheme with two classes \label{sec:ssP} }

In \cite{Del,MT}, the symmetrization of an association scheme has been defined and it has been shown that the expressions of the eigenvalues of the associated adjacency matrices 
are given by the multivariate Krawtchouk polynomials. 
We shall show that the symmetrization of an association scheme with two classes is a bivariate $P$-polynomial association scheme of type $(1/2,1/2)$. 
Note that any association scheme with two classes has the property of being (monovariate) $P$-polynomial and is equivalent to a strongly regular graph \cite{God}.

Let us recall the definition of the symmetrization. Let $A_0$, $A_1$ and $A_2$ define a $P$-polynomial association scheme whose associated matrices $L_i$, with entries 
$(L_i)_{hj}=p_{ij}^h$, read as follows (see \textit{e.g.} \cite{God})
\begin{equation}
 L_1=\begin{pmatrix}
      0& k & 0\\
      1& k-1-b & b \\
      0&c&k-c
     \end{pmatrix}\ ,\qquad L_2=\begin{pmatrix}
      0& 0 & \frac{b k}{c}\\
      0& b &  \frac{b k}{c}-b\\
     1&k-c&\frac{b k}{c}-1-k+c
     \end{pmatrix}.
\end{equation}
The matrices $A_i$ are $\bv \times \bv$ matrices with 
\begin{equation}
 \bv=\frac{k(b+c)+c}{c}.
\end{equation}
Let us define $A_{ij}$ by 
\begin{equation}
J(x)= (A_0 + x_1 A_1 +x_2A_2)^{\otimes N}= \sum_{\genfrac{}{}{0pt}{}{i,j=0}{i+j \leq N} }^N  A_{ij}\ x_1^i x_2^j, \label{eq:JN}
\end{equation}
where $x_1,x_2$ are abstract indeterminates.
The set $\{  A_{ij}    \ | \ i,j\geq 0,  i+j\leq N\}$, which is called symmetrization, defines an association scheme \cite{Del}.

An explicit form for the matrix $A_{ij}$ is the following:
\begin{equation}
	A_{ij}=\frac{1}{i!j!(N-i-j)!}\sum_{\pi\in S_N}\pi\cdot A_1^{\otimes i}\otimes A_2^{\otimes j}\otimes A_0^{\otimes N-i-j}\,, \label{eq:Aijsym}
\end{equation}
where the sum is over all the place permutations, and the prefactor ensures that each term appears only once. The sum over permutations $\pi$ in $S_N$ is the symmetrizer; it commutes with any $A_{ij}$ and, 
in particular, with $A_{10}$ and with $A_{01}$. A direct computation, with careful consideration of the prefactors, leads to
\begin{eqnarray}
 A_{10}A_{ij} & = &k(N-i-j+1)A_{i-1,j}+\bigl(i(k-1-b)+j(k-c)\bigr)A_{ij}+(i+1)A_{i+1,j}\notag\\
 && +c(j+1)A_{i-1,j+1}+b(i+1)A_{i+1,j-1}\,\label{eq:recs1},\\[0.5em]
 A_{01}A_{ij} & = & \displaystyle \frac{b k}{c}(N-i-j+1)A_{i,j-1}+\bigl(b i+j(\frac{b k}{c}-1-k+c)\bigr)A_{ij}+(j+1)A_{i,j+1}\notag\\
 && \displaystyle +(j+1)(k-c)A_{i-1,j+1}+(i+1)(\frac{b k}{c}-b)A_{i+1,j-1}\,.\label{eq:recs2}
\end{eqnarray}
From the previous results, we conclude that we have a bivariate $P$-polynomial association scheme of type $(1/2,1/2)$ (see Figure \ref{fig:dom4}) and
 $(1/2,1/2)$-compatible polynomials $v_{ij}$ of degree $(i,j)$ such that
\begin{equation}
 A_{ij}=v_{ij}(A_{10},A_{01})\,,\ \ \ \text{for any $i,j$ with $i+j\leq N$.}
\end{equation}

\begin{rem}
If $c=k$ then two terms cancel in the second recurrence relation. In this case, the association scheme is bivariate $P$-polynomial of type $(0,1/2)$. 
\end{rem}

\begin{exam} \label{ex:HO}
The scheme associated to the symmetrization of the Hamming scheme is called the ordered Hamming scheme \cite{Bie,MS}.
Let us recall that the Hamming scheme $H(2,q)$ corresponds to
\[k=2(q-1)\,,\ \ \ b=q-1\,,\ \ \ \ c=2\ .\] 
Then the recurrence relations of the ordered Hamming scheme become:
\begin{eqnarray}
 A_{10}A_{ij} & = &2(q-1)(N-i-j+1)A_{i-1,j}+(q-2)(i+2j)A_{ij}+(i+1)A_{i+1,j}\notag\\
 && +2(j+1)A_{i-1,j+1}+(q-1)(i+1)A_{i+1,j-1}\,,\\[0.5em]
 A_{01}A_{ij} & = & \displaystyle (q-1)^2(N-i-j+1)A_{i,j-1}+\bigl(i(q-1)+j(q-2)^2)\bigr)A_{ij}+(j+1)A_{i,j+1}\notag\\
 && \displaystyle +2(j+1)(q-2)A_{i-1,j+1}+(i+1)(q-1)(q-2)A_{i+1,j-1}\,.
\end{eqnarray}
For $q=2$, the second relation becomes a three-term recurrence relation (see the preceding remark).
\end{exam}

We now compute the eigenvalues of $A_{ij}$. For an association scheme with 2 classes, one gets \cite{God}
\begin{eqnarray}
 &&A_0= E_0 +  E_1  +  E_2\,,\\
 &&A_1=k E_0 + \theta E_1  + \tau E_2\,,\\
 &&A_2=(\bv-1-k)E_0-(\theta+1) E_1 -(\tau+1)E_2\,,
\end{eqnarray}
where the eigenvalues $\theta$ and $\tau$ are related to the parameters $b$ and $c$ as follows
\begin{equation}
 b=-(\theta+1)(\tau+1), \qquad c=k+\theta \tau,\qquad \bv=\frac{k(b+c)+c}{c}.
\end{equation}
Now, for $(i,j) \in \mathcal{D}$ one can consider the matrices $E_{ij}$ defined by
\begin{equation}
	E_{ij}=\frac{1}{i!j!(N-i-j)!}\sum_{\pi\in S_N}\pi\cdot E_1^{\otimes i}\otimes E_2^{\otimes j}\otimes E_0^{\otimes N-i-j}\,, \label{eq:Eijsym}
\end{equation}
and observe that they are the projectors onto the eigenspaces of $A_{10}$ and $A_{01}$ associated to eigenvalues $\theta_{ij}$ and $\mu_{ij}$:
\begin{equation}
A_{10} E_{ij} = \theta_{ij}E_{ij}, \quad A_{01} E_{ij} = \mu_{ij}E_{ij},
\end{equation}
where
\begin{equation}
 \theta_{ij}=(N-i-j)k+i\theta+j\tau, \qquad \mu_{ij}=(N-i-j)\frac{kb}{c}-i(\theta+1)-j(\tau+1).
\end{equation}
Let us take $\cD^* = \cD$ and rewrite the generating function $J(x)$ \eqref{eq:JN} using the idempotents $E_i$ 
\begin{equation}
 J(x)=(1+kx_1+\frac{kb}{c}x_2)^N \left(E_0 +\frac{1+\theta x_1-(\theta+1)x_2}{1+kx_1+\frac{kb}{c}x_2}E_1 +\frac{1+\tau x_1-(\tau+1)x_2}{1+kx_1+\frac{kb}{c}x_2}E_2 \right)^{\otimes N}.
\end{equation}
By remarking that $(E_0+X_1E_1+X_2E_2)^{\otimes N}=\sum_{i+j\leq N} X_1^i X_2^j E_{ij}$, one gets
\begin{equation}
 J(x)=\sum_{i+j\leq N}(1+kx_1+\frac{kb}{c}x_2)^{N-i-j} (1+\theta x_1-(\theta+1)x_2)^i (1+\tau x_1-(\tau+1)x_2)^j E_{ij}.
\end{equation}
Using relations \eqref{eq:AE} and \eqref{eq:pv}, one finds another expression for $J(x)$:
\begin{equation}
 J(x) =\sum_{i+j\leq N}\left( \sum_{m+n\leq N}v_{mn}(\theta_{ij},  \mu_{ij}) \  x_1^m x_2^n  \right)E_{ij}.
\end{equation}
Comparing both expressions of $J(x)$, the generating functions of the polynomials $v_{ij}$ are obtained:
\begin{eqnarray}
 \sum_{m+n\leq N}v_{mn}(\theta_{ij},  \mu_{ij}) \  x_1^m x_2^n= (1+kx_1+\frac{kb}{c}x_2)^{N-i-j} (1+\theta x_1-(\theta+1)x_2)^i (1+\tau x_1-(\tau+1)x_2)^j.\ \ 
\end{eqnarray}
The R.H.S. of this relation can be identified with the generating function of the bivariate Krawtchouk polynomials (see \textit{e.g.} \cite{GVZ}):
\begin{equation}
 v_{mn}(\theta_{ij},  \mu_{ij})= \sqrt{\frac{N!}{(N-m-n)!m!n!}}   \left(\sqrt{k}\right)^m \left(\sqrt{\frac{kb}{c}}\right)^n P_{mn}(i,j;N).
\end{equation}
The functions $P_{mn}(i,j;N)$ defined in \cite{GVZ} are proportional to the bivariate Krawtchouk polynomials \cite{HR,IT,Ili} and are defined from a matrix $R$ of $SO(3)$ which reads as follows in terms of the parameters used here
\begin{equation}
 R=\begin{pmatrix}
    \theta\sqrt{\frac{\tau+1}{(\tau-\theta)(k-\theta)}  } & \sqrt{\frac{(\theta+1)(\tau\theta+k)  }{ (\theta-\tau)(k-\theta)}} & \sqrt{\frac{k(\tau+1)}{(\tau-\theta)(k-\theta)}}\\
    -\tau\sqrt{\frac{\theta+1}{(\tau-\theta)(\tau-k)} } & \sqrt{\frac{ (\tau+1)(\tau\theta+k)}{(\tau-\theta)(k-\tau)}} & -\sqrt{\frac{k(\theta+1)}{(\tau-\theta)(\tau-k)}}\\
    \sqrt{\frac{k(\tau\theta+k)}{(k-\theta)(k-\tau)}}  & \sqrt{\frac{k(\theta+1)(\tau+1)}{(\theta-k)(k-\tau)}} & \sqrt{\frac{\tau\theta+k}{(k-\theta)(k-\tau)}}
   \end{pmatrix}.
\end{equation}
Let us remark that the recurrence relations \eqref{eq:recs1}-\eqref{eq:recs2} of $v_{mn}(x,y)$ can be recovered from the ones of $P_{mn}(i,j;N)$ given in \cite{GVZ}.

\subsection{A generalized $24-$cell}

In \cite{MMW} (see Theorem 3.6), an association scheme with 4 classes which generalizes the $24$-cell is studied. It is proven that it is $Q$-polynomial but not $P$-polynomial. We shall show that it is bivariate $P$-polynomial.
The matrices $L_i$, with entries $(L_i)_{kj}=p_{ij}^k$, are given by $L_0=\II_5$, 
\begin{eqnarray}
 &&L_1=\begin{pmatrix}
        0 & 16\ell s^2 & 0 & 0 & 0 \\
        1 & 2(\ell-1)s(4s+1) & (4s-1)(4s+1)& 2(\ell-1)s(4s-1)& 0\\
        0 & 8\ell s^2 & 0 & 8\ell s^2 & 0\\
         0 & 2(\ell-1)s(4s-1) & (4s-1)(4s+1)& 2(\ell-1)s(4s+1)& 1\\
         0 & 0 & 0 & 16\ell s^2 & 0
       \end{pmatrix},\\
        &&  L_2=\begin{pmatrix}
            0 & 0  & 2(4s-1)(4s+1) & 0 & 0\\
            0 & (4s-1)(4s+1) & 0 & (4s-1)(4s+1) & 0\\
            1 & 0 & 32s^2-4 & 0 & 1 \\
               0 & (4s-1)(4s+1) & 0 & (4s-1)(4s+1) & 0\\
               0 & 0  & 2(4s-1)(4s+1) & 0 & 0
           \end{pmatrix},\\
        && L_3=\begin{pmatrix}
        0 & 0 & 0 & 16\ell s^2 & 0 \\
        0 & 2(\ell-1)s(4s-1) & (4s-1)(4s+1)& 2(\ell-1)s(4s+1)& 1\\
        0 & 8\ell s^2 & 0 & 8\ell s^2 & 0\\
         1 & 2(\ell-1)s(4s+1) & (4s-1)(4s+1)& 2(\ell-1)s(4s-1)& 0\\
         0 & 16\ell s^2 & 0 & 0 &  0
       \end{pmatrix},
\end{eqnarray}
and
\begin{eqnarray}
       &&L_4=\begin{pmatrix}
              0 & 0 & 0 & 0 & 1\\
               0 & 0 & 0 & 1 & 0\\
                0 & 0 & 1 & 0 & 0\\
                 0 & 1 & 0 & 0 & 0\\
                  1 & 0 & 0 & 0 & 0
             \end{pmatrix}.
\end{eqnarray}
The fact that this scheme is not $P$-polynomial reads in the fact that $L_1$ is not tridiagonal.
Let us define 
\begin{equation}
 A_{00}=A_0\ , \quad A_{10}=A_2\ , \quad A_{01}=A_3\ , \quad A_{11}=A_1\ , \quad A_{20}=A_4\ .
\end{equation}
By direct computation (we recall that $A_i$ and $L_i$ satisfy the same relations), these matrices satisfy
\begin{eqnarray}
 && A_{11}=\frac{1}{(4s-1)(4s+1)} A_{01}A_{10}-A_{01}, \\
 && A_{20}=\frac{1}{2(4s-1)(4s+1)} A_{10}^2-\frac{2(8s^2-1)}{(4s-1)(4s+1)} A_{10}-A_{00}.
\end{eqnarray}
This demonstrates that it is a bivariate $P$-polynomial association scheme of type $(0,0)$ (see Figure \ref{fig:dom1}). 

\subsection{Non-binary Johnson association scheme}

The non-binary Johnson scheme is a generalization of the Johnson scheme which has eigenvalues that can be expressed in terms of bivariate polynomials 
formed of Krawtchouk and Hahn polynomials \cite{Dun,TAG}. We show here that it is a bivariate $P$-polynomial association scheme of type $(1,0)$.

We start by recalling the definition of the non-binary Johnson scheme, which can be found in \cite{TAG}. 
Let $K=\{0,1,2,\dots,r-1\}$, where $r$ is an integer greater than $1$ ($r \geq 2$), and consider the $n$-fold Cartesian product $K^n$, where $n$ is a positive integer. 
For a vector $\bx$ in $K^n$ with components $\bx_i$, the weight $w(\bx)$ is defined as the number of non-zero components of $\bx$, that is
\begin{equation}
	w(\bx) = \big|\{ i \ | \ \bx_i \neq 0  \}\big|. \label{eq:weight}
\end{equation}   
For two vectors $\bx,\by \in K^n$, the number of equal non-zero components $e(\bx,\by)$ and the number of common non-zeros $c(\bx,\by)$ are also defined:
\begin{align}
	e(\bx,\by) = \big|\{ i \ | \ \bx_i=\by_i \neq 0  \}\big|, \quad c(\bx,\by) = \big|\{ i \ | \ \bx_i\neq 0, \ \by_i\neq 0 \}\big|. \label{eq:ecnonzeros}
\end{align}
Consider a fixed weight number $k$. Note that we must have $0 \leq k \leq n$ by definition \eqref{eq:weight}. The set
\begin{equation}
	X = \{ \bx \in K^n \ | \ w(\bx)=k \} \,,
\end{equation} 
together with all the non-empty relations\footnote{The relations $R_{ij}$ of \cite{TAG} have been relabeled as follows: $i \mapsto i+j$ and $j \mapsto j$.}
\begin{equation}
	R_{ij} = \{ (\bx,\by) \in X^2 \ | \ e(\bx,\by)=k-i-j, \ c(\bx,\by)=k-j \}\,, \label{eq:RijNBJ}
\end{equation}    
define a symmetric association scheme called the non-binary Johnson scheme and  denoted $J_r(k,n)$, following the notation of \cite{TAG}. 
From this definition, one can construct the adjacency matrices $A_{ij}$ of the non-binary Johnson scheme. 
These are $|X|\times |X|$ matrices whose entries, labeled by the couples $(\bx,\by)\in X^2$, take the value one if $(\bx,\by) \in R_{ij}$ and zero otherwise.
In particular, for $i=j=0$, the adjacency matrix $A_{00}$ is the identity matrix since $(\bx,\by) \in R_{00}$ if and only if $\bx=\by$. 
More generally, one can observe from the definitions \eqref{eq:ecnonzeros} and \eqref{eq:RijNBJ} that two vectors $\bx,\by$ such that $(\bx,\by) \in R_{ij}$ have:
\begin{enumerate}
\setlength\itemsep{1pt}
	\item[(i)] $e(\bx,\by)=k-i-j$ equal non-zero components;
	\item[(ii)] $c(\bx,\by)-e(\bx,\by)=i$ unequal common non-zero components;
	\item[(iii)] $k-c(\bx,\by)=j$ uncommon non-zero components;
	\item[(iv)] $j$ uncommon zero components;
	\item[(v)] $n-k-j$ common zero components.
\end{enumerate}
It is therefore seen that the non-empty relations $R_{ij}$ are such that 
\begin{equation}
	0\leq i \leq k-j, \quad 0 \leq j \leq \min\{k,n-k\}.
\end{equation}

When $r=2$, it is seen that $J_2(k,n)$ is the Johnson scheme $J(n,k)$ (see \textit{e.g.} \cite{BI}). Indeed, in this case the alphabet $K$ is binary and we must have $e(\bx,\by)=c(\bx,\by)$, which implies that only the relations $R_{ij}$ with $i=0$ and $0 \leq j \leq \min\{k,n-k\}$ are non-empty. Moreover, the couples $(\bx,\by) \in R_{0j}$ are such that the Hamming distance of the vectors $\bx$ and $\by$ (\textit{i.e.}\ the number of unequal components) is the constant $2j$. In what follows, we will suppose $r\geq 3$.

Using the properties (i)--(v), it is possible to compute the following relations for the adjacency matrices:
\begin{align}
	A_{10}A_{ij} =& (k-i-j+1)(r-2) A_{i-1,j} + \left( i(r-3)+j(r-2) \right) A_{ij} +(i+1)A_{i+1,j}, \label{eq:rec10NBJ} \\
	A_{01}A_{ij} =& (k-i-j+1)(r-2)j A_{i-1,j} + (k-i-j+1)(n-k-j+1)(r-1)A_{i,j-1} \nonumber \\ 
	&+ (i+1)j A_{i+1,j} + (j+1)^2 A_{i,j+1} + (i+1)(n-k-j+1)(r-1) A_{i+1,j-1} \nonumber \\
	&+ (j+1)^2(r-2) A_{i-1,j+1} +j\left( k-i-j+(r-2)i+(n-k-j)(r-1)\right)A_{i,j}. \label{eq:rec01NBJ}
\end{align} 
As an example, the coefficient before the matrix $A_{i-1,j}$ in \eqref{eq:rec10NBJ} is computed as follows. 
For two vectors $\bx$ and $\by$ such that $(\bx,\by) \in R_{i-1,j}$, one needs to count the number of vectors $\bz$ such that $(\bx,\bz) \in R_{10}$ and $(\bz,\by) \in R_{ij}$. 
Using the properties (i)--(v), one finds that such vectors $\bz$ must have all components equal to those of $\bx$ except for one component $\bz_s$ at some coordinate $s$ 
which must be such that $\bz_s \neq 0$ and $\bz_s \neq \bx_s=\by_s$. There are $k-(i-1)-j$ possibilities for the coordinate $s$ because of (i), 
and $r-2$ possibilities for the value of $\bz_s \in K$. One then takes the product of these possibilities to obtain the total number of vectors $\bz$, 
which gives the coefficient written in \eqref{eq:rec10NBJ}. The other coefficients are found similarly. 

From the relations \eqref{eq:rec10NBJ} and \eqref{eq:rec01NBJ}, we can deduce using Proposition \ref{pr:z1} that the non-binary Johnson scheme is a bivariate $P$-polynomial association scheme of type $(1,0)$ (see Figure \ref{fig:recu3}).
Note that if $k\leq n-k$, then the domain $\cD \subseteq \mathbb{N}^2$ of the couples $(i,j)$ for the adjacency matrices $A_{ij}$ is the triangle $i+j \leq k$, whether if $n-k < k$, the domain is the same triangle 
truncated horizontally at $j=n-k$. In both cases, the domain $\cD$ is $(1,0)$-compatible (see Figure \ref{fig:dom3}).

In \cite{TAG}, the eigenvalues of the non-binary Johnson scheme $J_r(k,n)$ are labeled by couples of integers $(x,y)\in \cD^* = \cD $ and given explicitly in terms of 
bivariate polynomials\footnote{The following change of variables has been applied to the eigenvalues given in \cite{TAG} in order to fit with our conventions: $i \mapsto i+j, x\mapsto x+y, y\mapsto x$. 
The definitions of the polynomials have been also changed: $K_i(N,p,x)\mapsto K_i(x,N,p)$ and $E_i(N,p,x)\mapsto E_i(x,N,p)$.}:
\begin{equation}
	p_{ij}(x,y) = (r-1)^jK_{i}(x,k-j,r-1)E_{j}(y,n-x,k-x), \label{eq:pijNBJ}
\end{equation}
where $(i,j),(x,y) \in \cD$ and for $i=0,1,\dots,N$,
\begin{align}
	&K_i(x,N,p) = \sum_{\ell=0}^i(-1)^\ell (p-1)^{i-\ell} {x\choose \ell} {N-x\choose i-\ell}, \label{eq:krawtchouk} \\
	&E_i(x,N,p) = \sum_{\ell=0}^i (-1)^\ell {x\choose \ell} {p-x\choose i-\ell} {N-p-x\choose i-\ell}. \label{eq:eberlein}
\end{align}
Expression \eqref{eq:krawtchouk} is the Krawtchouk polynomial while \eqref{eq:eberlein} is the Eberlein polynomial. 
The latter can be expressed in terms of the (dual) Hahn polynomial and is known to provide the eigenvalues of Johnson scheme. 
As a byproduct of our approach in this paper, we have obtained recurrence relations for the polynomial \eqref{eq:pijNBJ}. Indeed, because of Proposition \ref{pr:z2}, we must have
\begin{equation}
	p_{ij}(x,y) = v_{ij}(p_{10}(x,y),p_{01}(x,y)),
\end{equation}
where $v_{ij}(x,y)$ is the (unique) bivariate polynomial such that $A_{ij}=v_{ij}(A_{10},A_{01})$ for the non-binary Johnson scheme (see Definition \ref{def:bi}). Therefore, the recurrence relations \eqref{eq:rec10NBJ} and \eqref{eq:rec01NBJ} for the adjacency matrices imply the same recurrence relations for the polynomials 
$p_{ij}(x,y)$ given in \eqref{eq:pijNBJ}, with the replacements $A_{ij}\mapsto p_{ij}(x,y)$.

\begin{rem}
	There is a connection between the non-binary Johnson scheme and the ordered Hamming scheme, 
	which we recall is the symmetrization of the Hamming scheme $H(2,q)$ (see example \ref{ex:HO} above). Indeed, consider the case $q=2$, and let $\{A_{ij} \ | \ i,j\geq 0, i+j\leq N \}$ be the set of adjacency matrices of the ordered Hamming scheme, as in Section \ref{sec:ssP}. These matrices act on a vector space $V$ of dimension $4^N$ with basis vectors $e_i$ for $i=1,\dots,4^N$. Consider now the set $W_k=\{e_i \ | \ e_1^t A_{N-k,0} \; e_i \neq 0 \}$ for $k$ any integer such that $0\leq k \leq N/2$, and denote by 
	$\overline{A}_{ij}$ the restriction of the matrices $A_{ij}$ on $W_k$. Then, using \eqref{eq:Aijsym}, one can show that the set of matrices $\{\overline{A}_{2s,j} \ | \ 0 \leq s \leq k, \  0 \leq j \leq N-k-s \}$ is a symmetric association scheme corresponding to the non-binary Johnson scheme $J_3(N,N-k)$ (up to a relabeling of indices). Put differently, $J_3(N,N-k)$ 
	can be viewed as a particular projection of the ordered Hamming scheme. This connection between these two bivariate $P$-polynomial association schemes is analogous to the embedding of the Johnson scheme $J(n,k)$ in the Hamming scheme $H(n,2)$ that was described algebraically in terms of projection matrices in \cite{BCV}.              
\end{rem}

\subsection{Association schemes based on isotropic spaces}

Association schemes based on dual polar spaces are well-known examples of $P$- and $Q$-polynomial schemes \cite{BCN, Stan}. 
They are obtained by considering vector spaces of dimension $D$ defined over finite fields $\mathbb{F}_q$ and equipped with a non-degenerate form $\mathfrak{B}$. 

For such scheme, the set of vertices $X$ (\textit{i.e.} the labels of the columns and rows of its adjacency matrices $A_i$) is composed of the \textit{maximal} isotropic subspaces of $\mathbb{F}_q^D$. 
These are the largest subspaces $V \subset \mathbb{F}_q^D$ such that the evaluation of the form $\mathfrak{B}(v_1,v_2)$  vanishes for any two vectors $v_1, v_2 \in V$.  
By Witt's theorem, they all have the same dimension $N \geq D/2$. The relations $\{R_i\}_{0 \leq i \leq N}$ giving the non-zero entries of the adjacency matrices $A_i$ are given by
\begin{equation}
 R_{i} = \{(V,V') \in  X \times X \ |\  \text{dim}(V \cap V') = N - i \}\,.
\end{equation}
A generalization was introduced in \cite{Rie} by dropping the \textit{maximality} condition on the subspaces. 
Indeed, it was shown that a set of vertices composed of the isotropic $d$-subspaces of $\mathbb{F}_q^D$ equipped with a non-degenerate form $\mathfrak{B}$ still gives a symmetric association scheme 
if the following relations are introduced:
\begin{equation}
 R_{ij} = \{(V,V') \in  X \times X \ |\  \text{dim}(V \cap V') = d - i - j, \ \text{dim}(V^\perp \cap V') = d - i \}\,,
\label{refiso}
\end{equation}
where $d\leq N$ and $V^\perp$ is the subspace composed of vectors $v_1 \in \mathbb{F}_q^D$ verifying $\mathfrak{B}(v_1,v_2) = 0$ for any vector $v_2 \in V$. Note that this scheme has for domain,
\begin{equation}
\mathcal{D} = \{(i,j)\ | \ 0 \leq i \leq d, \ 0 \leq j \leq N - d, \  0 \leq i+j \leq d\}\,.
\end{equation}
The spherical functions associated to the lattice of isotropic $d$-subpaces and thus the eigenvalues $p_{ij}(mn)$ of the scheme were computed in \cite{Stan}. This set of subspaces also has a combinatorial design interpretation as the $d$th fiber of the uniform poset consisting of all the isotropic subspaces \cite{ter90poset}. One naturally recovers the dual polar schemes for $d = N$, but the $P$- and $Q$-polynomial properties are lost in general for $d \neq N$. Our claim is that the bivariate $P$-polynomial property holds for all $d \in \{1,2,\dots \lfloor N/2\rfloor\}$.

While computing all the coefficients $p_{ij,k\ell}^{mn}$ of these schemes remains an open problem, general observations can be made regarding vanishing intersection parameters. 
For any two isotropic $d$-subspaces $V$ and $V'$ in relation $R_{01}$ or $R_{10}$, one has $\text{dim}(V \cap V') = d-1$ and thus
\begin{equation}
 V = V \cap V' \oplus \text{span}\{v_1\} \quad \text{and} \quad V' = V \cap V' \oplus \text{span}\{v_2\}\,,
\label{is1}
\end{equation}
where $v_1$,$v_2 \in \mathbb{F}_q^D$.  For any third isotropic $d$-subspaces $U$, it follows that
\begin{equation}
| \text{dim}(V\cap U) - \text{dim}(V'\cap U) | \leq 1, \quad \text{and}\quad | \text{dim}(V\cap U^\perp) - \text{dim}(V'\cap U^\perp) | \leq 1\,.
\end{equation}
These inequalities and the definition of the relations $R_{ij}$ then imply that
\begin{equation}
|i+j - k - \ell| \geq 2 \quad \text{or} \quad |i-k|\geq 2 \quad \Rightarrow \quad p_{10, k\ell}^{ij} = p_{01, k\ell}^{ij} = 0\,.
\end{equation}
Other vanishing intersection parameters can also be identified for the case $p_{10, k\ell}^{ij}$. 
Then, one is interested in the possible relations $R_{ij}$ between $U$, $V$ and $V'$ given that $(V,V') \in R_{10}$. 
In addition to  \eqref{is1}, we get that $\mathfrak{B}(v_1,v_2) \neq 0$ and thus
\begin{equation}
 \text{dim}(V\cap U) - \text{dim}(V'\cap U)  = 1 \quad \Rightarrow \quad \exists \ v_3 = v_1 + r \in V \cap U, \ \text{with}\  r \in V \cap V'\,.
\end{equation}
In particular, $v_3$ is in $V^\perp$ but not in  $V'^\perp$. To have $\text{dim}(V'^\perp \cap U) \geq \text{dim}(V^\perp \cap U)$, 
one would therefore require the existence of at least one vector $v'$ contained in $V'^\perp \cap U$ but not in $V^\perp \cap U$. 
By construction, it would verify $\mathfrak{B}(v', v_1) \neq 0$ and  $\mathfrak{B}(v', r) = 0$ such that $\mathfrak{B}(v', v_3) \neq 0$. 
Since $U$ has to be isotropic and $v_3 \in U$, such vector $v'$ cannot exist and one finds
\begin{equation}
 |i + j - k - \ell | =  1 \quad \Rightarrow \quad \ell = j\,.
\end{equation}
In terms of conditions on intersection parameters, it reads
\begin{equation}
  |i + j - k - \ell | =  1,\  \ell \neq j \quad \Rightarrow \quad p_{10, ij}^{k\ell} = 0\,.
\end{equation}

From these observations and the fact that $\mathcal{D}$ is $(1,1/2)$-compatible for $d \leq N/2$, we get from Proposition \ref{pr:z1} that the isotropic $d$-subspaces 
with the set of relations $R_{ij}$ yields a bivariate $P$-association schemes of type $(1,1/2)$. 
To illustrate this result, let us consider the case where $\mathfrak{B}$ is a symplectic form, $d = 2$ and $D \geq 6$. 
The domain $\mathcal{D}$ is then small enough to allow the direct computation of all intersection coefficients $p_{10, ij}^{k\ell}$ and $p_{10, ij}^{k\ell}$ explicitly through simple combinatorial arguments. 
The matrices $L_{10}$ and $L_{01}$, of entries $(L_{10})_{k\ell, ij} = p_{10,ij}^{k\ell}$ and $(L_{01})_{k\ell,ij} = p_{01,ij}^{k\ell}$, are given by

\[
L_{10} =
\begin{blockarray}{ccccccc}
00 & 10 & 01 & 11 & 20 & 02 \\
\begin{block}{(cccccc)c}
  0 & (q+1)q^{D - 3} & 0 & 0 &   0 & 0 & 00 \\
  1 & (q-1) q^{D -4} &  (q^{D - 4} - 1)&  0 & q^{D - 2} & 0 & 10\\
  0 & (q-1) q^{D -4} & q^{D-4} & q^{D-2} & 0 & 0 & 01 \\
  0 & 0 & q & (2q^2 - 1)q^{D- 5} & (q-1)q^{D-3} & q(q^{D-6} - 1) & 11 \\
  0 & (q+1) & 0 & (q+1)(q^{D-4} - 1) & (q^2 - 1)q^{D-4} & 0 & 20 \\
  0 & 0 & 0 & (q^2-1)(q+1)q^{D-5} & 0 & (q+1)q^{D-5} & 02 \\
\end{block}
\end{blockarray}
\]
 
\[
 L_{01} =
\begin{blockarray}{ccccccc}
00 & 10 & 01 & 11 & 20 & 02\\
\begin{block}{(cccccc)c}
  0 & 0 & \frac{(q+1)(q^{D-3} -q)}{(q-1)} & 0 & 0 & 0 & 00 \\
  0 & q^{D-4} - 1 & \frac{q^{D-4} - 1}{q-1} & q^2\frac{q^{D-4} - 1}{q-1} & 0 & 0 & 10 \\
  1 & q^{D - 4} & \frac{q^{D-4} - 1}{(q-1)} + q^2 - 2 & q^{D - 3} & 0 & q^3\frac{q^{D - 6} -1}{(q-1)} & 01 \\
  0 & q & 1 & \frac{(2q^2 - 1)(q^{D-5} -1)}{(q-1)} & q^{D-3} & \frac{q(q^{D-6} - 1)}{(q-1)} & 11 \\
  0 & 0 & 0 & \frac{(q+1)(q^{D-4} - 1)}{(q-1)} & (q+1)(q^{D-4} -1) & 0 & 20 \\
  0 & 0 & (q+1)^2 & (q+1)^2q^{D-5} & 0 & (q+1)\left(\frac{q^{D-5} -q^2 - q + 1}{q-1} \right) & 02 \\
\end{block}
\end{blockarray}
\]
Since the entries of these matrices give the coefficients in relations of the Bose--Mesner algebra, they allow to express $A_{10}$, $A_{01}$ and $A_{11}$ as the following polynomials of $A_{10}$ and $A_{01}$,
\begin{eqnarray}
A_{11}& =& q^{-1} A_{10} A_{01}- q^{D-5}  A_{01} - q^{-1} (q^{D-4} - 1)A_{10}\,,\\
A_{20} &=& \frac{1}{q+1} A_{10}^2  - \frac{(q-1)q^{D-4}}{(q+1)} A_{10} - \frac{(q-1)q^{D-4}}{(q+1)} A_{01}  - q^{D - 3} A_{00}\,,\\
A_{02} &=& \frac{1}{(q+1)^2}A_{01}^2 -\frac{1}{q(q+1)^2} A_{10} A_{01} + \left(\frac{q^{D-5}}{(q+1)^2}  - \frac{q^{D-4} - 1}{(q^2 - 1)(q+1)} - \frac{q^2 - 2}{(q+1)^2}\right)A_{01}\\ 
&+& \left( \frac{(q^{D-4} - 1)}{q(q+1)^2} - \frac{q^{D-4} - 1}{(q^2 - 1)(q+1)}\right) A_{10} - \frac{q^{D-3} - q}{q^2 - 1} A_{00}\,, \nonumber
\end{eqnarray}
which highlights the bivariate $P$-polynomial nature of the scheme.

\subsection{Association schemes based on attenuated spaces}

To explore another family of association schemes, let us consider a vector space of dimension $D + L$ over the finite field $\mathbb{F}_q$ 
and one of its subspaces $W$ of dimension $L$. For a subspace $V \subseteq \mathbb{F}_q^{D+ L}$, 
the quotient of $V+W$ by $W$ is denoted $(V+W)/W$. The set $X$ of subspaces $V$ of dimension $d$ with intersection $V \cap W = \emptyset$ is the $d$th fiber of a uniform poset and defines what is called an \textit{attenuated space} \cite{ter90poset, Wen16}.
Let us focus on the case $L\geq d$ (see Remark \ref{rem:Ld} below).
It was observed in \cite{WGL} that such a set equipped with relations $R_{ij}$ given by
\begin{equation}\label{R:as}
R_{ij} = \{(V,V') \in X \times X \ |\  \text{dim}(V\cap V' ) = d - i - j, \ \text{dim}( (V+W) /W \cap (V'+W)/W) = d - j \}\,,
\end{equation} 
defines a symmetric association scheme of domain
\begin{equation}
\mathcal{D} = \{(i,j) \in \mathbb{N}^2 \ | \ j \leq \text{min}(d, D-d), \ i+j \leq  d \}\,. 
\end{equation}
This domain $\mathcal{D}$ is $(1,0)$-compatible. 
A formula for all its intersection parameters $p_{ij, k\ell}^{mn}$ was also provided in \cite{WGL}. 
In the case of $p_{ij, k\ell}^{10}$ and $p_{ij, k\ell}^{01}$, the expressions simplify greatly and can be obtained through simple combinatorial arguments. 
This yields the following relations between the adjacency matrices,
\begin{equation}
 \begin{split}
        A_{01} A_{ij} &= q^{2j + i + L - 1} [d-i-j+1]_q[D-d-j+1]_qA_{i j-1} + [j+1]_q^2 q^{i} A_{i j+1} \\
        &  + [d-i-j+1]_q[j]_q(q^L - q^{i-1})q^{i + j }A_{i-1 j}+ [i+1]_q[j]_q q^{i + j + 1 }A_{i+1 j}  \\
        &+ [j+1]_q^2(q^L - q^{i-1})A_{i-1 j+1} + [i+1]_q[D-d-j+1]_q q^{2j + L - 1}A_{i+1 j-1} \\
       &+ [j]_q\big( [D-d-j]_q q^{L + 1 + j} + [d-i-j]_q q^{j + 2i + 1 } + [i]_q(q^L - q^{i-1})q^{j + 1 } + [j]_	q(q-1) q^{ L }\big)A_{ij}\,,
 \end{split}
  \label{asr1}
 \end{equation}
 and
 \begin{equation}
 \begin{split}
     A_{10}A_{ij} &= (q^L -q^{i-1}) [d-i-j+1]_q q^{i+j-1} A_{i-1j} + [i+1]_q q^{i+j} A_{i+1j} \\ &+ \big((q^L - 1)[i+j]_q - [i]_q q^{i+j-1} + (q-1)q^{i+j}[d-i-j]_q [i]_q\big) A_{ij}\,,
 \end{split}
 \label{asr2}
 \end{equation}
where $[n]_q = (q^n - 1)/(q-1)$ are $q$-numbers. From \eqref{asr1} and \eqref{asr2}, one can check that the scheme verifies conditions \eqref{eq:cm1}-\eqref{eq:cm4} with respect to the partial order $\preceq_{(1,0)}$. 
Since the domain $\mathcal{D}$ is also $(1,0)$-compatible, Proposition \ref{pr:z1} implies that this scheme is bivariate $P$-polynomial of type $(1,0)$. 

There thus exist polynomials $v_{ij}$ verifying $A_{ij} = v_{ij}(A_{10}, A_{01})$. These are solutions of  \eqref{asr1} and \eqref{asr2} interpreted as recurrence relations. 
Alternatively, they can be obtained using \eqref{eq:pv} and the spectrum $p_{ij}(m,n)$ of the adjacency matrices $A_{ij}$. An expression for the latter can be found in \cite{Dun2,Kur} and is given by, for $ (i,j) \in \cD$,
\begin{equation}
  p_{ij}(m,n) = q^{jL} K_i(d-j, L ; q ; n) Q_{j}(D-n,d-n;q;m)\,,
  \label{spAtt}
\end{equation}
where $(m,n) \in \cD^* =\cD$ labels the eigenspaces of the matrices $A_{ij}$ and $K_i$, $Q_{j}$ are respectively $q$-Krawtchouk polynomials and $q$-Hahn polynomials,
 \begin{equation}
     K_i(j, L ; q ; n) = \sum_{k = 0}^i (-1)^{i-k} q^{k L + \binom{i-k}{2}} \begin{bmatrix}
j-k \\
j-i
\end{bmatrix}_q \begin{bmatrix}
j-n \\
k
\end{bmatrix}_q,
 \end{equation}
 \begin{equation}
     Q_{i}(k,j;q;m) = \sum_{\ell = 0}^i (-1)^{i-\ell} q^{\ell m + \binom{i-\ell}{2}} \begin{bmatrix}
j-\ell \\
j-i
\end{bmatrix}_q
\begin{bmatrix}
j-m \\
\ell
\end{bmatrix}_q
\begin{bmatrix}
k-j+\ell-m \\
\ell
\end{bmatrix}_q\,,
\end{equation}
and $\left[\begin{smallmatrix} a \\ b \end{smallmatrix}\right]_q$ is the $q$-binomial coefficient. Therefore, the polynomials $v_{ij}$ are obtained from the relation
\begin{equation}
v_{ij}( \theta_{mn}, \mu_{mn}) =   q^{jL} K_i(d-j, L ; q ; n) Q_{j}(D-n,d-n;q;m)\,, 
\end{equation}
where
\begin{equation}
\mu_{mn} = q^L\big(q^{m}[d-m-n]_q[D-d+1-m]_q - [d-n]_q\big)\,,
\end{equation}
and
\begin{equation}
\theta_{mn} = -[d]_q + q^L [d-n]_q\,.
 \end{equation}

\begin{rem}
This scheme was constructed as a generalization of the Grassmann scheme $J_q(D,d)$ and bilinear scheme $H_q(D,L)$. 
The former is recovered by taking $L=0$ and the latter by imposing $d = D$. 
Similarities in the spectrum of the adjacency matrices and in the relations \eqref{eq:rec10NBJ}-\eqref{eq:rec01NBJ} and \eqref{asr1}-\eqref{asr2} also suggest 
that schemes based on attenuated spaces offer $q$-deformations of non-binary Johnson schemes $J_2(D,d)$. 
\end{rem}

 \begin{rem}\label{rem:Ld}  
 For $L< d$, relations $R_{ij}$ given by \eqref{R:as} still define an association scheme with adjacency matrices verifying \eqref{asr1}-\eqref{asr2} and with a spectrum given by \eqref{spAtt}. However the domain becomes 
 \begin{equation}
\mathcal{D} = \{(i,j) \in \mathbb{N}^2 \ | \ j \leq \text{min}(d, D-d), \ i \leq \text{min}(d-j, L) \}\,. 
\end{equation}
This domain is not $(1,0)$-compatible. Therefore in this case, it is not any more a bivariate $P$-polynomial association scheme. 
There still exists a family of polynomials $v_{ij}$ for which $A_{ij} = v_{ij}(A_{10}, A_{01})$, but their recurrence relations and those of the adjacency matrices lose their correspondence. A similar situation arises for association schemes based on isotropic spaces with $d > N/2$.  
 \end{rem} 

\section{Bivariate $Q$-polynomial association scheme \label{sec:Qpol}}

\subsection{Definition}

The notion of $Q$-polynomial association scheme is developed in \cite{Del} (see also \cite{BCN}) and is dual to the $P$-polynomial one.
In this section, we provide a generalization of this notion to the case of bivariate polynomials and dual to Definition \ref{def:bi}.
\begin{defi} \label{def:biQ}
Let $\cD^\star \subset \NN^2$, $0 \leq \alpha\leq 1$, $0 \leq \beta<1$ and $\preceq_{(\alpha,\beta)}$ be the order \eqref{eq:partord}.
 The association scheme with idempotents $E_0,E_1,\dots E_N$ is called bivariate $Q$-polynomial of type $(\alpha,\beta)$ on the domain $\cD^\star$ if these two conditions are satisfied:
\begin{itemize}
  \item[(i)]there exists a relabeling of the idempotents:
 \begin{equation}
  \{E_0,E_1,\dots E_N\} = \{ E_{mn} \ |\ (m,n) \in \cD^\star \}\,,
 \end{equation}
 such that, for $(m,n) \in \cD^\star$,
\begin{equation}
 \bv\, E_{mn}=v^\star_{mn}( \bv\,E_{10}, \bv\, E_{01})\quad \text{(under the Hadamard product)},
\end{equation}
 where  $v^\star_{mn}(x,y)$ is a $(\alpha,\beta)$-compatible bivariate polynomial of degree $(m,n)$;
\item[(ii)] $\cD^\star$ is $(\alpha,\beta)$-compatible.
\end{itemize}
\end{defi}

By analogy with Section \ref{sec:idem}, the adjacency matrices of the association scheme will here be denoted by $A_{\lambda}$ with $\lambda \in \cD$, where $\cD$ is a set of labels not required to be a subset of $\mathbb{N}^2$. Moreover, the dual eigenvalues defined by \eqref{eq:EA} are here written as $q_{ij}(\lambda)$, where $(i,j) \in \cD^\star$ with the same $\cD^\star$ as in Definition \ref{def:biQ}, and $\lambda \in \cD$. This highlights the fact that bivariate $Q$-polynomial association schemes of type $(\alpha,\beta)$ do not necessitate the existence of a bivariate $P$-polynomial structure. 

Let us recall that the idempotents $E_{ij}$ of an association scheme satisfy a relation dual to \eqref{eq:intern} given by 
\begin{equation}
 E_{ij} \circ E_{k\ell} = \frac{1}{\bv}\sum_{(m,n)\in\cD^\star} q_{ij,k\ell}^{mn} E_{mn}\,,
\end{equation}
where $\circ$ is the Hadamard product (or entrywise product). The numbers $q_{ij,k\ell}^{mn}$ are called Krein parameters.
The generalization of the notion of $Q$-polynomial given above leads to the following proposition constraining the Krein parameters.
\begin{prop}\label{eq:propQ} Let $\cZ$ be a symmetric association scheme with idempotents $E_{ij}$, for $(i,j)\in \cD^\star$. The following items are equivalent:
\begin{itemize}
 \item[(i)] $\cZ$ is a bivariate $Q$-polynomial association scheme of type $(\alpha,\beta)$ on $\cD^\star$;
  \item[(ii)] $\cD^\star$ is $(\alpha,\beta)$-compatible and the Krein parameters satisfy, for  $(i,j),(i+1,j) \in \cD^\star$,
  \begin{eqnarray}
   && q_{10,ij}^{i+ 1 , j}\neq 0,\ \  q_{10,i+1j}^{i , j}\neq 0\,, \label{eqq01} \\
   && q_{10,ij}^{mn}\neq 0\quad \left(\text{or}\ \  q_{10,mn}^{ij}\neq 0\right) \quad \Rightarrow \quad   (m,n)\preceq_{(\alpha,\beta)} (i+1,j)\,,\label{eqq03}
  \end{eqnarray}
  and, for  $(i,j),(i,j+1) \in \cD^\star$,
    \begin{eqnarray}
   && q_{01,ij}^{i, j+1}\neq 0,\ \ q_{01,ij+1}^{i , j}\neq 0\,,  \label{eqq04}\\
 && q_{01,ij}^{mn}\neq 0\quad \left(\text{or}\ \  q_{01,mn}^{ij}\neq 0\right)\quad \Rightarrow \quad   (m,n)\preceq_{(\alpha,\beta)} (i,j+1)\,; \label{eqq02}
  \end{eqnarray}
    \item[(iii)] $\cD^\star$ is $(\alpha,\beta)$-compatible and the dual eigenvalues $q_{ij}(\lambda) $ defined by \eqref{eq:EA} satisfy
 \begin{equation}
  q_{ij}(\lambda)=v^\star_{ij}( \theta^\star_{\lambda},  \mu^\star_{\lambda})\,, \label{eq:pv2}
 \end{equation}
 where $\theta^\star_{\lambda}=q_{10}(\lambda)$ and $\mu^\star_{\lambda}=q_{01}(\lambda)$, and $v^\star_{ij}(x,y)$ is a $(\alpha,\beta)$-compatible bivariate polynomial  of degree $(i,j)$.
\end{itemize}
\end{prop}
\proof The proof follows the same lines as the proofs of Propositions \ref{pr:z1} and \ref{pr:z2}. \endproof
This proposition leads to a generalization of the cometric property. 
An association scheme is called $(\alpha,\beta)$-cometric inside the domain $\cD^\star$ if $\cD^\star$ is $(\alpha,\beta)$-compatible and if the Krein parameters $q_{ij,k\ell}^{mn}$ satisfy the constraints \eqref{eqq01}-\eqref{eqq02}.

From now on, let $\cZ=\{ A_{ij} \ | \ (i,j)\in \cD \}$ be bivariate $P$-and $Q$-polynomial, with $\cD, \cD^* \subset \mathbb{N}^2$.
As usual, one defines renormalized polynomials as follows 
\begin{equation}
 u_{ij}(x,y)=\frac{v_{ij}(x,y)}{k_{ij}}, \qquad u^\star_{ij}(x,y)=\frac{v^\star_{ij}(x,y)}{m_{ij}},
\end{equation}
where 
\begin{equation}
 k_{ij}=p_{ij}(00), \qquad m_{mn}=q_{mn}(00)\,, \label{eq:km}
\end{equation}
 are the valency and the multiplicity, respectively. Now recall that the eigenvalues $p_{ij}(mn) $ and dual eigenvalues $q_{mn}(ij)$ of any symmetric association scheme are related as follows (see for example \cite[Theorem 3.5]{BI}),
\begin{equation}
 \frac{q_{mn}(ij) }{m_{mn}}=\frac{p_{ij}(mn) }{k_{ij}}\,. \label{eq:qp}
\end{equation}
Relation \eqref{eq:qp} implies the Wilson symmetry of the polynomials $u_{ij}$ and $u_{ij}^*$ (see \cite{Del} for the monovariate case), \textit{i.e.} one gets for $(m,n)\in \cD$ and $(i,j)\in \cD^\star$
\begin{equation}
 u_{mn}(\theta_{ij},  \mu_{ij}) =u^\star_{ij}(\theta^\star_{mn},  \mu^\star_{mn}). \label{eq:Wilson}
\end{equation}
Therefore, the polynomials $u_{mn}(x,y)$ satisfy recurrence relations but also difference equations (obtained from the recurrence relations of $u^\star_{ij}$).
They are solutions of a bispectral problem. Moreover, both polynomials $u_{ij}$ and $u^\star_{ij}$ satisfy an orthogonality relation:
\begin{equation}
	\sum_{(r,s) \in \cD^\star} m_{rs} u_{ij}(\theta_{rs},\mu_{rs})u_{mn}(\theta_{rs},\mu_{rs})=\frac{\bv}{k_{ij}}\delta_{ij,mn}\,, \label{eq:orth}
\end{equation} 
and
\begin{equation}
	\sum_{(r,s) \in \cD} k_{rs} u^\star_{ij}(\theta^\star_{rs},\mu^\star_{rs})u^\star_{mn}(\theta^\star_{rs},\mu^\star_{rs})=\frac{\bv}{m_{ij}}\delta_{ij,mn}\,. \label{eq:orths}
\end{equation}
Indeed, these are obtained by combining equations \eqref{eq:AE} and \eqref{eq:EA} in two different ways, and by using \eqref{eq:pv}, \eqref{eq:pv2} and \eqref{eq:Wilson}.

Following the proof of Section \ref{sec:ex1}, it is straightforward to show that the direct product of $Q$-polynomial associations is a bivariate $Q$-polynomial association scheme.
We treat in Section \ref{sec:exQb} another example in detail.

\subsection{Subconstituent algebra  \label{sec:sa}}

As mentioned previously the matrices $A_{ij}$ form a commutative algebra, known as the Bose--Mesner algebra. 
For any association scheme it is useful to introduce a more general algebra called the subconstituent algebra (or Terwilliger algebra) \cite{ter1,ter2,ter3}.

To do that, fix $1\leq i_0 \leq \bv$ and define the diagonal matrices $A_{mn}^\star$ as follows, for $1\leq i,j \leq \bv$,
\begin{eqnarray}
  (A^\star_{mn})_{i,j }&=&\bv \delta_{i,j} (E_{mn})_{i_0,j}.
\end{eqnarray}
These matrices satisfy 
\begin{equation}
 A^\star_{ij}  A^\star_{k\ell} = \sum_{(m,n)\in\cD^\star} q_{ij,k\ell}^{mn} A^\star_{mn}.
\end{equation}
The commutative algebra realized by these $A^\star_{ij}$ is called the dual Bose--Mesner algebra.
For a bivariate $Q$-polynomial association scheme, one gets 
\begin{equation}
 A^\star_{mn}=v^\star_{mn}( A^\star_{10}, A^\star_{01}).
\end{equation}

The algebra formed by $A_{ij}$ and $A^\star_{ij}$ is called the subconstituent algebra and is usually non-commutative.
In the case of a bivariate $P$- and $Q$-polynomial association scheme, this algebra is generated by the four elements: $A_{10}$, $A_{01}$, $A^\star_{10}$ and $A^\star_{01}$.

\subsection{Example: the symmetrization of an association scheme with two classes revisited \label{sec:exQb}}

Let $L^\star_i$ be the matrices with entries $(L^\star_i)_{jk}=q_{ik}^j$. For an association scheme with 2 classes as studied in Section \ref{sec:ssP}, one gets 
\begin{eqnarray}
 L^\star_1=\begin{pmatrix}
      0& k^\star & 0\\
      1& k^\star-1-b^\star & b^\star \\
      0&c^\star&k^\star-c^\star
     \end{pmatrix}\ ,\qquad L_2^\star=\begin{pmatrix}
      0& 0 & \frac{b^\star k^\star}{c^\star}\\
      0& b^\star &  \frac{b^\star k^\star}{c^\star}-b^\star\\
     1&k^\star-c^\star&\frac{b^\star k^\star}{c^\star}-1-k^\star+c^\star
     \end{pmatrix}\,,
\end{eqnarray}
with $k^\star= \frac{k(\tau+1)(k-\tau)}{(\tau\theta+k)(\tau-\theta)}\,,\ $
 $b^\star= -\frac{\tau(\theta+1)(k-\theta)^2}{(\tau-\theta)^2(\tau\theta+k)}\,,\ $ and $c^\star= \frac{\tau(\tau+1)(k-\tau)(k-\theta)}{(\tau-\theta)^2(\tau\theta+k)}$.

In this case, we can naturally parametrize the pairs of eigenvalues for $A_{10},A_{01}$ by the same subset $\cD^\star=\cD=\{(i,j)\ | \ i+j\leq N\}$. 
Indeed, we recall that the elements $E_{ij}$ are defined by the following formula:
\begin{equation}
	(E_0+X_1E_1+X_2E_2)^{\otimes N}=\sum_{i+j\leq N} X_1^i X_2^j E_{ij}\ .
\end{equation} 
It is immediate that $E_{ij}$ are the idempotents associated to the common eigenspaces of the matrices $A_{ij}$. 
From the formulas defining the idempotents $E_{ij}$, we have at once for the dual matrices $A^\star_{ij}$ that:
\begin{equation}
	(A^\star_0+X_1A^\star_1+X_2A^\star_2)^{\otimes N}=\sum_{i+j\leq N} X_1^i X_2^j A^\star_{ij}\ .
\end{equation} 
In other words, the matrices $A^\star_{ij}$ are obtained by applying the symmetrization process to the matrices 
$A^\star_0,A^\star_1,A^\star_2$. The recurrence relations and thus the $Q$-polynomiality of the scheme, follow exactly as in the previous section 
(one simply has to replace the parameters $k,b,c$ by the dual ones $k^\star,b^\star,c^\star$).

One thus finds that $A^\star_{ij}=v^\star_{ij}(A^\star_{10},A^\star_{01})$ where the dual polynomials $v^\star_{ij}$ satisfy the following recurrence relations:
\begin{eqnarray}
xv^\star_{ij} & = &k^\star(N-i-j+1)v^\star_{i-1,j}+\bigl(i(k^\star-1-b^\star)+j(k^\star-c^\star)\bigr)v^\star_{ij}+(i+1)v^\star_{i+1,j}\notag\\
 && +c^\star(j+1)v^\star_{i-1,j+1}+b^\star(i+1)v^\star_{i+1,j-1}\,\label{eq:recs1star},\\[0.5em]
 yv^\star_{ij} & = & \displaystyle \frac{b^\star k^\star}{c^\star}(N-i-j+1)v^\star_{i,j-1}+\bigl(b^\star i+j(\frac{b^\star k^\star}{c^\star}-1-k^\star+c^\star)\bigr)v^\star_{ij}+(j+1)v^\star_{i,j+1}\notag\\
 && \displaystyle +(j+1)(k^\star-c^\star)v^\star_{i-1,j+1}+(i+1)(\frac{b^\star k^\star}{c^\star}-b^\star)v^\star_{i+1,j-1}\,.\label{eq:recs2star}
\end{eqnarray}

Recall that the eigenvalues $(\theta_{ij},\mu_{ij})$ of $A_{10},A_{01}$, are given by
\begin{equation}
 \theta_{ij}=(N-i-j)k+i\theta+j\tau, \qquad \mu_{ij}=(N-i-j)\frac{kb}{c}-i(\theta+1)-j(\tau+1)\,.
\end{equation}
Similarly those of $A^\star_{10},A^\star_{01}$ read
\begin{equation}
 \theta^\star_{ij}=(N-i-j)k^\star+i\theta^\star+j\tau^\star, \qquad \mu_{ij}=(N-i-j)\frac{k^\star b^\star}{c^\star}-i(\theta^\star+1)-j(\tau^\star+1)\,.
\end{equation}
Let us also remark that the relation 
\begin{eqnarray}
 J(x) E_{00}= [(A_0+x_1A_1+x_2A_2)E_0]^{\otimes N} =\sum_{i+j\leq N} \frac{N!}{i!j!(N-i-j)!}(kx_1)^i\left(\frac{bk}{c}x_2\right)^j E_{00}\,,
\end{eqnarray}
implies that 
\begin{equation}
 p_{ij}(0,0)= \frac{N!}{i!j!(N-i-j)!}k^i\left(\frac{bk}{c}\right)^j\,.
\end{equation}
Similarly $q_{ij}(0,0)$ is obtained by replacing $k,\tau,\theta$ by $k^\star,\tau^\star,\theta^\star$ in the previous relation.
The Wilson symmetry reads in this case
\begin{equation}
 \frac{i!j!(N-i-j)!}{k^i (bk/c)^j} v_{ij}(\theta_{mn},\mu_{mn}) =\frac{m!n!(N-m-n)!}{(k^\star)^m (b^\star k^\star/c^\star)^n} v^\star_{mn}(\theta^\star_{ij},\mu^\star_{ij})\,.
\end{equation}
This makes explicit that this scheme obtained by symmetrization is bivariate $P$- and $Q$-polynomial.

\section{Outlook}\label{sec:outlooks}

All the notions introduced in this paper should be generalizable to more than two variables. This would lead to the definition of a multivariate $P$-polynomial association scheme but 
would require a generalization of the definition \ref{def:compa} about the compatibility of a multivariate polynomial.
The direct product of $n$ different $P$-polynomial association schemes or the symmetrization of a $P$-polynomial association scheme with $n$ classes would be examples of $n$-variate 
$P$-polynomial association schemes. It would also be interesting to find more examples of association schemes with the bivariate (or more generally, multivariate) $Q$-polynomial property. 
A first step could be to investigate if this property is present in all the examples presented in Section \ref{sec:ex}. For the non-binary Johnson scheme, the dual eigenvalues are given in \cite{Dun,TAG}, and 
for the association schemes based on attenuated spaces, in \cite{Kur}. These results may allow us to prove that these schemes are bivariate $Q$-polynomial using Propoition \ref{eq:propQ}.

In the monovariate case, it is well-known that $P$-polynomial association schemes correspond to distance-regular graphs \cite{BI}. A
bivariate generalization of the concept of distance-regular graph that would be equivalent to the notion of bivariate $P$-polynomial association scheme could be explored.    

The subconstituent algebra of a $P$- and $Q$-polynomial association scheme has been characterized. Indeed, in this case, the generators of the subconstituent 
algebra are $(A_1,A_1^\star)$ and they satisfy the tridiagonal relations \cite{ter3}. This has also led to the definition and study of Leonard pairs and their relation with 
the Askey--Wilson relations \cite{TV}. It would be quite interesting to extend this for bivariate $P$- and $Q$-polynomial association schemes. Indeed, as explained in Section \ref{sec:sa},
the subconstituent algebra is generated by four elements and it would be enlightening to find the relations between them. Those relations might depend on the parameters $\alpha$ and $\beta$
defining the type of bivariate association scheme.  Such an algebraic approach holds hopes of a classification of the 
bivariate $P$- and $Q$-polynomial association schemes. 

The generalization of Leonard pairs would
also be natural in this context (see the conclusion of \cite{IT} where a definition is already proposed). One could then speak of higher rank Leonard pairs. 
The classification of these pairs would be very interesting in the context of the bispectral multivariate polynomials.
 
\vspace{1cm}

\noindent \textbf{Acknowledgements.} The authors are grateful to Xiaohong Zhang for her comments, corrections and suggestions. They are also thankful to professeor Eiichi Bannai for his corrections. 
PAB and MZ hold an Alexander-Graham-Bell scholarship from the Natural Sciences and Engineering Research Council of Canada (NSERC).
NC and LPA thank the CRM for its hospitality and are supported by the international research project AAPT of the CNRS and the ANR Project AHA ANR-18-CE40-0001. 
The research of LV is supported by a Discovery Grant from the Natural Sciences and Engineering Research Council (NSERC) of Canada.


\begin{thebibliography}{9}

\bibitem{Bai} R.A. Bailey, \textsl{Association Schemes: Designed Experiments, Algebra and Combinatorics,} Cambridge Studies in Advanced Mathematics, Series Number 84, 2004.

\bibitem{BI} E. Bannai and T. Ito,
\textsl{Algebraic combinatorics I,} Benjamin-Cummings, Menlo Park, 1984.

\bibitem{BCV} P.-A. Bernard, N. Crampe and L. Vinet,
\textsl{Entanglement of free fermions on Johnson graphs,}
\href{https://arxiv.org/abs/2104.11581}{\texttt{arXiv:2104.11581}}, 2021.

\bibitem{Bie} J. Bierbrauer, 
\textsl{A direct approach to linear programming bounds for codes and tms-nets,}
Designs, Codes and Cryptography $42$ (2007) 127--143.

\bibitem{BM} R.C. Bose and M.D. Mesner, \textsl{On linear associative algebras corresponding to association schemes of partially balanced designs,} 
Annals of Mathematical Statistics 30 (1959) 21--38. 

\bibitem{BCN} A.E. Brouwer, A. Cohen and A. Neumaier, \textsl{Distance Regular-Graphs}, Springer-Verlag, Berlin, 1989.

\bibitem{CFR} N. Crampe, L. Frappat and E. Ragoucy,
\textsl{Representations of the rank two Racah algebra and orthogonal multivariate polynomials,}
\href{https://arxiv.org/abs/2206.01031}{\texttt{arXiv:2206.01031}}.

\bibitem{Del} P. Delsarte, 
\textsl{An algebraic approach to the association schemes of coding theory,} 
Philips Res. Rep. Suppl. No. 10 (1973).

\bibitem{Dun} C.F. Dunkl,
\textsl{A Krawtchouk polynomial addition theorem and wreath products of symmetric
groups}, 
Indiana Univ. Math. J. 25 (1976) 335--358.

\bibitem{Dun2} C.F. Dunkl,
\textsl{An addition theorem for some q-Hahn polynomials}, 
Monatshefte f{\"u}r Mathematik. 85 (1978) 5--37.

\bibitem{GVZ} V.X. Genest, L. Vinet and A. Zhedanov,
\textsl{The multivariate Krawtchouk polynomials as matrix elements of the rotation group representations on oscillator states,}
J. Phys. A: Math. Theor. 46 (2013) 505203 and \href{https://arxiv.org/abs/1306.4256}{\texttt{arXiv:1306.4256}}.

\bibitem{GI}
 J.S. Geronimo and P. Iliev,
 \textsl{Bispectrality of multivariable Racah-Wilson polynomials,} 
 Constr. Approx. 31 (2010) 417--457 and \href{https://arxiv.org/abs/0705.1469}{\texttt{arXiv:0705.1469}}.

\bibitem{GI2}
J.S. Geronimo and P. Iliev,
\textsl{Multivariable Askey--Wilson function and bispectrality,}
The Ramanujan journal 24 (2011) 273--287.
 
\bibitem{God2} C.D. Godsil,
\textsl{Algebraic Combinatorics,} New York: Chapman and Hall (1993).
 
\bibitem{God} C.D. Godsil, 
\textsl{Association Schemes}, 
Lecture notes, available at \url{https://www.math.uwaterloo.ca/~cgodsil/pdfs/assoc2.pdf}, 2004.

\bibitem{Gri}
 R. Griffiths, \textsl{Orthogonal polynomials on the multinomial distribution,} 
 Austral. J. Statist. 13 (1971) 27--35.

 
\bibitem{GW}
W. Groenevelt and C. Wagenaar, 
\textsl{An Askey--Wilson algebra of rank 2}, \href{https://arxiv.org/abs/2206.03986}{\texttt{arXiv:2206.03986}}.
 
\bibitem{HR} M.R. Hoare and M. Rahman,
\textsl{A probabilistic origin for a new class of bivariate polynomials,}
SIGMA 4 (2008) 89--106 and \href{https://arxiv.org/abs/0812.3879}{\texttt{arXiv:0812.3879}}.

\bibitem{Ili}
 P. Iliev,
 \textsl{A Lie-theoretic interpretation of multivariate hypergeometric polynomials,}
 Compositio Mathematica 148 (2012) 991--1002 and \href{https://arxiv.org/abs/1101.1683}{\texttt{arXiv:1101.1683}}.

 \bibitem{IT}  P. Iliev and P. Terwilliger,
 \textsl{The Rahman polynomials and the Lie algebras $sl_3(\CC)$,}
  Trans. Am. Math. Soc. 364 (2012) 4225--4238 and \href{https://arxiv.org/abs/1006.5062}{\texttt{arXiv:1006.5062}}.
 
\bibitem{Kur}H. Kurihara,
\textsl{Character Tables of Association Schemes Based on Attenuated Spaces,}
Ann. Comb. 17 (2013) 525--541 and \href{https://arxiv.org/abs/1101.3455}{\texttt{arXiv:1101.3455}}.

\bibitem{Leo} D.A. Leonard,
\textsl{Orthogonal polynomials, duality and association schemes,}
SIAM J. Math. Anal. 13 (1982) 656--663. 

\bibitem{MMW} W.J. Martin, M. Muzychuk and J. Williford,
\textsl{Imprimitive cometric association schemes: Constructions and analysis,}
J. Algebr. Comb. 25 (2007) 399--415.

\bibitem{MS} W.J. Martin and D.R. Stinson, 
\textsl{Association Schemes for Ordered Orthogonal Arrays and $(T, M, S)$-Nets,}
Canad. J. Math. 51 (1999) 326--346.

\bibitem{MT}H. Mizukawa and H. Tanaka,
\textsl{$(n + 1, m + 1)$-hypergeometric functions associated to character algebras,}
Proc. Am. Math. Soc. 132 (2004)  2613--2618.

\bibitem{Rie} M.Q. Rieck,
\textsl{Association schemes based on isotropic subspaces, Part 1,}
Discrete mathematics 298 (2005) 301--320.

\bibitem{Stan} D. Stanton,
\textsl{Some $q$-Krawtchouk polynomials on Chevalley groups,}
Am. J. Math. 102 (1980) 625--662.

\bibitem{TAG} H. Tarnanen, M.J. Aaltonen and J.-M. Goethals,
\textsl{On the Nonbinary Johnson Scheme,}
Europ. J. Combinatorics 6 (1985) 279--285.

\bibitem{ter90poset} P. Terwilliger, \textsl{The incidence algebra of a uniform poset},
Coding theory and design theory (1990) 193--212.

\bibitem{ter1} P. Terwilliger, \textsl{The Subconstituent Algebra of an Association Scheme, (Part I)},
Journal of Algebraic Combinatorics 1 (1992) 363--388.

\bibitem{ter2} P. Terwilliger, \textsl{The Subconstituent Algebra of an Association Scheme, (Part II)},
Journal of Algebraic Combinatorics 2 (1993) 73--103.

\bibitem{ter3} P. Terwilliger, \textsl{The Subconstituent Algebra of an Association Scheme, (Part III)},
Journal of Algebraic Combinatorics 2 (1993) 177--210.

\bibitem{TV} P. Terwilliger and R. Vidunas, 
\textsl{Leonard pairs and the Askey--Wilson relations,}
J. Algebra Appl. 3 (2004) 411--426 and \href{https://arxiv.org/abs/math/0305356}{\texttt{arXiv:math/0305356}}.

\bibitem{Tra} M.V. Tratnik,
\textsl{Some multivariable orthogonal polynomials of the Askey tableau-discrete families,}
J. Math. Phys. 32 (1991) 2337--2342.

\bibitem{WGL} K. Wang, J. Guo and F. Li,
\textsl{Association schemes based on attenuated spaces,}
Europ. J. Combinatorics 31 (2010) 297--305.

\bibitem{Wen16} L. Wen,
\textsl{The attenuated space poset $A_q (N, M)$,}
Linear Algebra and its Applications 506 (2016) 244--273 and \href{https://arxiv.org/abs/1605.00625}{\texttt{arXiv:1605.00625}}.

\bibitem{Zie2} P.-H. Zieschang, \textsl{The exchange condition for association schemes,} Israel Journal of Mathematics 151 (2006) 357--380.


\end{thebibliography}
\end{document}